\documentclass[12pt]{amsart}
\usepackage{amsmath,amscd,amsthm,amsfonts, amssymb,amsxtra}
\usepackage{epsf}
\usepackage[all]{xy}
\usepackage{hyperref}
  \hypersetup{colorlinks=true,citecolor=blue}
  \usepackage{graphicx}
\CDat

\SelectTips{cm}{}
\subjclass[2010]{Primary: 57N35; Secondary: 57P10, 55.70}
\newtheorem{thm}{Theorem}[section]  
\newtheorem*{un-no-thm}{Theorem}
\newtheorem{cor}[thm]{Corollary}     
\newtheorem{lem}[thm]{Lemma}         
\newtheorem{prop}[thm]{Proposition}  
        
\newtheorem{add}[thm]{Addendum}
\newtheorem{bigthm}{Theorem}

\newtheorem*{A.1}{Conjecture A.1}
\newtheorem*{B.1}{Conjecture B.1}
\newtheorem*{conjEpd}{Conjecture C}
\newtheorem*{C.1}{Conjecture C.1}
\newtheorem*{conjEFpd}{Conjecture D}
\newtheorem*{D.1}{Theorem D.1}

\theoremstyle{definition}
\newtheorem{defn}[thm]{Definition}   

\theoremstyle{definition}

\theoremstyle{definition}

\theoremstyle{remark}
\newtheorem{rem}[thm]{Remark}

\newtheorem*{acks}{Acknowledgements}

\newtheorem*{ass}{Assumption}

\newtheorem{notation}[thm]{Notation}

\DeclareMathOperator\Diff{Diff}

\DeclareMathOperator\holim{holim}

\DeclareMathOperator\hocodim{hocodim}
\DeclareMathOperator\Aut{Aut}
\DeclareMathOperator\fib{fib}
\DeclareMathOperator\rel{rel}


\begin{document}
\title[Multiple disjunction]{Multiple disjunction for spaces \\ of smooth embeddings}
\date{\today}
\author{Thomas G. Goodwillie}
\address{Brown University, Providence, RI 02912}
\email{tomg@math.brown.edu}
\author{John R. Klein}
\address{Wayne State University, Detroit, MI 48202}
\email{klein@math.wayne.edu}
\begin{abstract}  We obtain multirelative
connectivity statements about spaces of smooth embeddings, deducing these from a similar result
about spaces of Poincar\'e embeddings that was established in \cite{Goodwillie-Klein} and a similar result about condordance embeddings that was established in \cite{thesis}. 
\end{abstract}
\thanks{Both authors have been partially supported by the NSF}
\maketitle
\setlength{\parindent}{15pt}
\setlength{\parskip}{1pt plus 0pt minus 1pt}
\def\Sp{\bold S\bold p}
\def\vo{\varOmega}
\def\vs{\varSigma}
\def\smsh{\wedge}
\def\flush{\flushpar}
\def\id{\text{id}}
\def\dbslash{/\!\! /}
\def\:{\colon}
\def\Bbb{\mathbb}
\def\bold{\mathbf}
\def\cal{\mathcal}
\def\CE{\mathit{CE}}
\def\EF{\mathit{EF}}
\def\Wh{\mathit{Wh}}

\setcounter{tocdepth}{1}
\tableofcontents
\addcontentsline{file}{sec_unit}{entry}

\section{Introduction}

This paper forms a pair with \cite{Goodwillie-Klein}, and to some extent the introduction to that paper serves as an introduction to this one, too.

Our results are multirelative connectivity statements: they assert that certain cubical diagrams of spaces are \lq highly connected\rq\ in the sense of being $k$-cartesian for a given value of $k$. For terminology and basic facts about connectivity and cubical diagrams, including the \lq higher Blakers-Massey Theorem\rq , see the early sections of \cite{Goodwillie_CALC2} or the appendix of \cite{Goodwillie-Klein}.

Our main results are Theorems \ref{E} through \ref{EF} below. We regard Theorems \ref{E}, \ref{symm}, \ref{excision}, and \ref{int} as one result looked at in four different ways. Theorem \ref{EF} is closely related.

Let $E(P,N)$ be the space of all smooth embeddings of a compact manifold $P$ in the manifold $N$. It is elementary to show that when $Q$ is a submanifold of $N$ then the inclusion map $E(P,N-Q)\to E(P,N)$ is $(n{-}p{-}q{-}1)$-connected, where $n$, $p$, and $q$ are the dimensions. This is true by simple dimension-counting (transversality): a $k$-parameter family of maps $P\to N$ is generically disjoint from $Q$ if $k{+}p<n{-}q$. 

Theorem \ref{E} is a multirelative generalization of this fact.
Briefly, the statement is the following: Let $N$ and $P$ be as above and suppose that $Q_1,\dots ,Q_r$ is a collection of pairwise disjoint submanifolds of $N$. For $S\subset \underline r=\lbrace 1,\dots ,r\rbrace$ write $Q_S=\cup_{i\in S}Q_i$. Then the $r$-dimensional cubical diagram $E(P,N-Q_\bullet)$ formed by the spaces $E(P,N-Q_S)$ is $(1{-}p{+}\Sigma_{i=1}^r(n{-}q_i{-}2))$-cartesian. We succeed in proving this in all cases except the one corresponding to ordinary knot theory, when $n{=}3$, $p{=}1$, and $q_i{=}1$. Most of the work goes into dealing with the case when the codimensions $n{-}p$ and $n{-}q_i$ are at least $3$. The proof uses techniques from homotopy theory, surgery, and concordance (pseudoisotopy) theory. 

\begin{rem}\label{F}Let $F(P,N)$ be the space of all maps from $P$ to $N$. The cube $F(P,N-Q_\bullet)$ is always $(1{-}p+\Sigma_{i=1}^r(n{-}q_i{-}2))$-cartesian. This follows from the fact that the cube $N-Q_\bullet$ is, by the higher Blakers-Massey theorem \cite[2.4]{Goodwillie_CALC2},  $(1{+}\Sigma_{i=1}^r(n{-}q_i{-}2))$-cartesian.
\end{rem}
\begin{rem}\label{vw}In an Appendix to \cite{Goodwillie-Klein} a statement similar to Theorem \ref{E} but with a generally much lower number is proved using only dimension-counting and the higher Blakers-Massey theorem. It says that $E(P,N-Q_\bullet)$ is $(1{-}rp{+}\Sigma_{i=1}^r(n{-}q_i{-}2))$-cartesian, with no restriction on dimensions. This will be useful in \S\ref{summing-up} and \S\ref{cod2} for handling some low-dimensional cases.
\end{rem}

Theorem \ref{symm} is a variant of Theorem \ref{E}, easily seen to be equivalent to it. In Theorem \ref{symm} the role of $P$ is no different from that of any $Q_i$.

Theorem \ref{excision}, a more elaborate version of Theorem \ref{symm}, is the statement that guarantees strong convergence of Taylor towers of embedding functors in codimension three or more in Weiss's manifold functor calculus (see \cite[th.\ 1.4, ex.\ 2.2, th.\ 2.3]{GW}). 
There the goal is to systematically compare $E(M,N)$ with spaces $E(U,N)$ where $U\subset M$ is small, or special -- for example, to describe $E(M,N)$ as a homotopy limit of spaces $E(U,N)$ where each $U$ is the union of finitely many disjoint disks. Theorem \ref{excision} expresses a connectivity property (\lq analyticity\rq\ or \lq approximate higher excision\rq\ ) of the functor $U\mapsto E(U,N)$.

Theorem \ref{int} is Theorem \ref{symm} restated in terms of moduli spaces of manifolds rather than spaces of embeddings.

Theorem \ref{EF} is a multirelative generalization of the fact that the inclusion $E(P,N)\to F(P,N)$ of the space of all embeddings into the space of all maps is $(n{-}2p{-}1)$-connected. It is closely related to Theorem \ref{E}, and their proofs are inextricably mixed together.

We now state the results in more detail and in a little more generality, and explain how they are related to each other.

\subsection{Conventions}

When speaking of embeddings of a compact manifold $P$ in a manifold $N$, 
we allow $P$ to have a boundary, all or part of which may be embedded in the boundary of $N$. The part that is in the boundary of $N$ never moves. Thus $P$ will be a manifold triad: its boundary is the union of two parts, $\partial_0P$ and $\partial_1P$, intersecting at a corner $\partial\partial_0P=\partial\partial_1P$. (Any of these sets might be empty or disconnected.) The convention is that some embedding $\partial_0P\to \partial N$ is fixed in advance and $E(P,N)$ denotes the space of all embeddings $P\to N$ restricting to this one. 

We also give ourselves the flexibility of working with statements that refer not to the dimension of a submanifold but rather to its {\it handle dimension}, essentially the dimension of a spine. 

\begin{defn}  A compact smooth manifold triad $(P;\partial_0 P,\partial_1 P)$ has {\it handle dimension} $\le p$ (relative to $\partial_0 P$)
if $P$ can be built up from a collar
$\partial_0 P\times I$ by attaching handles of index at most $p$.
\end{defn}
Of course handle dimension is less than or equal to dimension. 
\begin{rem}\label{thicken}Handle dimension is preserved when $P$ is replaced by a disk bundle over $P$:
Suppose that $(P;\partial_0P,\partial_1P)$ is a compact $p$-dimensional manifold triad and $P$ is the base of
a vector bundle $\xi$ with inner product. Then the total space $D(\xi)$ of the unit disk bundle has handle dimension $\le p$ if $\partial_0 D(\xi)$ is taken to be
the part of $D(\xi)$ lying over $\partial_0P$.\end{rem}

If $P$ is a submanifold of an $n$-manifold $N$ and $p$ is its handle dimension (relative to $P\cap\partial N$) then $n-p$ is called its handle codimension.

\begin{rem}If $P$ has handle dimension $\le p$, then it also has homotopy spine dimension $\le p$ in the sense of \cite{Goodwillie-Klein}. That is, the pair $(P,\partial_0P)$ is homotopy equivalent to a cellular pair of relative dimension at most $p$ and the pair $(P,\partial_1P)$ is $(n{-}p{-}1)$-connected.\end{rem}

\subsection{The First Main Result}\label{ThmA}

\begin{bigthm}\label{E} Let $N$ be a smooth $n$-manifold. Let $r\ge 1$ and suppose that $Q_1,\dots ,Q_r$ are compact smooth manifold triads with handle dimensions $q_i$, and that they are embedded disjointly in $N$ with $\partial_0Q_i=Q_i\cap\partial N$. Write $Q_S$ for the disjoint union $\cup_{i\in S}Q_i$. Let $P$ be a compact manifold triad of handle dimension $p$, with $\partial_0P$ embedded in $\partial N$ disjointly from the $Q_i$. Then the $r$-cube $E(P,N-Q_\bullet)$ is $(1{-}p +\sum_{i=1}^r (n{-}q_i{-}2))$-cartesian, except possibly in the case when $n{=}3$, $p{=}1$, and $q_i{=}1$ for all $i$.
\end{bigthm}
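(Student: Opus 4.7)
The plan is to deduce Theorem \ref{E} by combining the multirelative disjunction result for Poincar\'e embeddings from \cite{Goodwillie-Klein} with the corresponding statement for concordance (pseudoisotopy) embeddings from \cite{thesis}. The bridge is the surgery-theoretic comparison between smooth and Poincar\'e embeddings, whose failure is measured by concordance embedding spaces once one is working in handle codimension at least three. Accordingly, the first step is to reduce to the case in which $n - p \geq 3$ and $n - q_i \geq 3$ for every $i$; the remaining low-codimension cases (other than the excluded knot case) are treated separately in \S\ref{summing-up} and \S\ref{cod2} using the weaker bound of Remark \ref{vw} together with elementary transversality and a bit of case analysis.

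Next I would argue by induction on a handle decomposition of $P$ relative to $\partial_0 P$. The base case is a collar of $\partial_0 P$, for which the cube $E(P, N - Q_\bullet)$ is effectively constant, and the required cartesian-ness is immediate. For the inductive step, write $P = P_0 \cup h$, where $h$ is a handle of index $k \leq p$. Restriction to $P_0$ yields a map of $r$-cubes
\begin{equation*}
E(P, N - Q_\bullet) \longrightarrow E(P_0, N - Q_\bullet),
\end{equation*}
the base of which is sufficiently cartesian by the inductive hypothesis. The fiber over a chosen embedding of $P_0$ is, after straightening the handle, the $r$-cube of embeddings of a thickened disk $D^k \times D^{n-k}$ (rel boundary data) into the appropriate complement, so it suffices to show that this single-handle fiber cube is $(1 - p + \sum_i (n - q_i - 2))$-cartesian.

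At this point one invokes a surgery-theoretic homotopy pullback square of $r$-cubes of the schematic form
\begin{equation*}
\xymatrix{
E(h, N' - Q_\bullet) \ar[r] \ar[d] & E^{PD}(h, N' - Q_\bullet) \ar[d] \\
\CE(h, N' - Q_\bullet) \ar[r] & \CE^{PD}(h, N' - Q_\bullet),
}
\end{equation*}
valid because the codimensions have been arranged to be at least three. Cartesian-ness is preserved under termwise homotopy pullback, so the connectivity of the smooth cube is at least the minimum of the connectivities of the Poincar\'e and concordance cubes. The Poincar\'e cube is $(1 - p + \sum_i (n - q_i - 2))$-cartesian by the main theorem of \cite{Goodwillie-Klein}, and the (relative) concordance embedding cube is at least as cartesian by the main theorem of \cite{thesis}. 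Combining the two gives the required bound, completing the inductive step.

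The hard part will be the bookkeeping needed to align the three theories: verifying that the smooth/Poincar\'e/concordance comparison really produces an honest homotopy pullback of multidimensional cubes with matching connectivity estimates, and that the indexing shifts introduced by straightening a handle of index $k$ (and by keeping track of the $\partial_0$ versus $\partial_1$ boundary conditions) leave the numerical bound intact. A secondary obstacle is the low-codimension patchwork, where the surgery input is unavailable and one must fall back on Remark \ref{vw} and direct arguments while scrupulously avoiding the excluded $n = 3$, $p = 1$, $q_i = 1$ regime, in which none of the three ingredients delivers the required estimate.
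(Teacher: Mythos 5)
The key step in your plan---the ``surgery-theoretic homotopy pullback square'' relating $E$, $E^{PD}$, $\CE$, and a hypothetical $\CE^{PD}$---does not exist, and it is exactly where the real content of the theorem lies. There is no Poincar\'e-duality analogue of concordance embeddings mediating between smooth and Poincar\'e embeddings, and the smooth embedding cube is not a termwise homotopy pullback of the Poincar\'e cube against a concordance cube. What actually happens is a chain of comparisons, carried out at the level of moduli spaces of manifolds: $\cal I(D_\bullet)\to\cal I^b(D_\bullet)\to\cal I^s(D_\bullet)\to\cal I^f(D_\bullet)\to\cal I^h(D_\bullet)$, i.e.\ smooth $\to$ block $\to$ simple Poincar\'e $\to$ finite Poincar\'e $\to$ Poincar\'e. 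The smooth/block difference is where the concordance disjunction theorem of \cite{thesis} enters (\S\ref{pseudoisotopy-step}), and that comparison is only $\Sigma$-cartesian, not $\infty$-cartesian; the block/simple step is the surgery step via Wall's $\pi$-$\pi$ theorem (\S\ref{surgery-step}), which requires $n\ge 5$ and forces a separate ``almost cartesian'' device plus Remark \ref{vw} to handle $n=4$; and to even apply surgery one must first pass to simple Poincar\'e complexes, which requires controlling the Wall finiteness obstruction and the Whitehead torsion of the duality map (\S\ref{PD-step}, \S\ref{simplestep}). Your single square silently conflates all of this, and the inference ``connectivity of the smooth cube is at least the minimum of the connectivities of the corners'' is not how the estimates combine in any case: one proves that each comparison map of cubes is $\infty$- (or $\Sigma$-) cartesian and composes.

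Even if the comparison were repaired, your numerics do not close. The main theorem of \cite{Goodwillie-Klein} makes the Poincar\'e cube only $(-p+\Sigma)$-cartesian (equivalently $(2{-}n{+}\Sigma)$ in the $\cal I^h$ formulation); it is off by one from the sharp $1{-}p{+}\Sigma$, and the sharp Poincar\'e statement is not known. So your induction would at best deliver the weak estimate $(-p+\Sigma)$. The missing idea is the bootstrap through Theorem \ref{EF}: first prove the weak Theorem \ref{E} by the chain above; feed it into the handle-splitting argument of Lemma \ref{handlesplit} to obtain a correspondingly weak Theorem \ref{EF}; then, because $F(P,N-Q_\bullet)$ is sharply $(1{-}p{+}\Sigma)$-cartesian (Remark \ref{F}) and $n{-}2p{-}2{+}\Sigma\ge 1{-}p{+}\Sigma$ when $n{-}p\ge 3$ (Remark \ref{leeway}), recover the sharp Theorem \ref{E}, and from it the sharp Theorem \ref{EF}. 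Your reduction to a single handle and your sketch of the low-codimension patchwork do match the paper, but without the correct comparison chain and without the $E$/$\EF$ bootstrap the argument cannot reach the stated bound.
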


Note that the same statement with \lq dimension\rq\ instead of \lq handle dimension\rq\ is included as a special case. In fact, the general case could be deduced from this special case. However, in order to avoid normal bundle issues we prefer to work with the opposite extreme: the special case when $P$ and $Q_i$ have dimension $n$. 
Let us show that the theorem follows from this codimension zero case.

To see that it follows from the special case in which $P$ is $n$-dimensional, we replace $P$ by a tubular neighborhood. The cube $E(P,N-Q_\bullet)$ will be $k$-cartesian if for every point $e\in E(P,N)$ the cube made up of the homotopy fibers (over $e$) of the maps $E(P,N-Q_S)\to E(P,N)$ is $k$-cartesian. Let $\xi$ be a vector bundle over $P$ (a potential normal bundle for an embedding $e:P\to N$) whose restriction to $\partial_0P$ is identified with the normal bundle of $\partial_0P$ in $\partial N$. Make the disk bundle $D(\xi)$ into a manifold triad as in Remark \ref{thicken}, having the same handle dimension $p$ as $P$.
We have a fibration 
$$E(D(\xi),N-Q_S)\to E(P,N-Q_S)$$
(restriction to zero section) for each $S$. Its fiber over a given embedding is homotopy equivalent to the space of isomorphisms, fixed on $\partial_0P$, between $\xi$ and the normal bundle. In particular this fiber is independent of $S$, and this is the key point of the proof: it implies that the homotopy fiber of $E(D(\xi),N-Q_S)\to E(D(\xi),N)$ is equivalent to that of $E(P,N-Q_S)\to E(P,N)$. Thus the cube $E(P,N-Q_\bullet)$ must be $k$-cartesian if for every possible $\xi$ the cube $E(D(\xi),N-Q_\bullet)$ is $k$-cartesian for the same $k$. 

We can go further, reducing to the case in which $P$ is a single handle (or the result of attaching a single handle to a collar on $\partial_0P$). Induct on the number of handles in a handle decomposition of $P$. Suppose that $P=H\cup A$ where $H$ is a handle of index at most $p$ and the conclusion holds when $P$ is replaced by $A$. There is a fibration 
$$
E(P,N-Q_S)\to E(A,N-Q_S),
$$
the restriction map.
By assumption the $r$-cube $E(A,N-Q_\bullet)$ is $(1{-}p +\sum_{i=1}^r (n{-}q_i{-}2))$-cartesian. The $r$-cube $E(P,N-Q_\bullet)$ will be $(1{-}p +\sum_{i=1}^r (n{-}q_i{-}2))$-cartesian if the $(r+1)$-cube 
$$
E(P,N-Q_\bullet)\to E(A,N-Q_\bullet)
$$ 
is $(1{-}p +\sum_{i=1}^r (n{-}q_i{-}2))$-cartesian. For this it suffices if for each point $e\in E(A,N-Q_{\underline r})$ the $r$-cube of fibers
$$
E(H,N'-Q_\bullet)
$$
is $(1{-}p +\sum_{i=1}^r (n{-}q_i{-}2))$-cartesian, where $N'$ is the closed complement of $e(A)$ in $N$.

For completeness we now also explain why Theorem \ref{E} follows from the special case in which $p$ is the dimension of $P$. In fact, the case when $P=H$ is a $p$-handle follows from the case in which $(P;\partial_0P,\partial_1 P)\cong (D^p;S^{p-1},\emptyset)$, by the reverse of an argument given above. View $H$ as a tubular neighborhood of a copy of $D^p$ in $N$, therefore a disk bundle $D(\xi)$ over $D^p$, and use again that the fiber of the restriction map $E(D(\xi),N-Q_S)\to E(D^p,N-Q_S)$ is independent of $S$ up to homotopy.

We omit the even more elementary reduction to the case when each $Q_i$ has dimension $n$ (and thence to the case when $Q_i$ is a $q_i$-handle or a $q_i$-disk).

\subsection{A Symmetrical Formulation of the First Main Result}

The following is equivalent to Theorem \ref{E}. More precisely, Theorem \ref{symm} for $r$ is equivalent to Theorem \ref{E} for $r{-}1$.

\begin{bigthm}\label{symm}Let $N$ be a smooth $n$-manifold. Let $r\ge 2$ and suppose that $Q_1,\dots ,Q_r$ are compact smooth manifold triads, and that the manifolds $\partial _0Q_i$, are embedded pairwise disjointly in $\partial N$. Let $q_i$ be the handle dimension of $Q_i$ with respect to $\partial_0Q_i$. Write $Q_S$ for the disjoint union $\cup_{i\in S}Q_i$. Then the $r$-cube $E(Q_\bullet,N)$ is $(3{-}n{+}\Sigma_{i=1}^r(n{-}q_i{-}2))$-cartesian, except possibly in the case when $n=3$ and $q_i=1$ for all $i$.
\end{bigthm}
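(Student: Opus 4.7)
The plan is to derive Theorem \ref{symm} for $r$ directly from Theorem \ref{E} for $r{-}1$, as the authors indicate. The key move is to single out the $r$-th coordinate of the $r$-cube $E(Q_\bullet,N)$ and regard it as a natural transformation between two $(r{-}1)$-cubes: the source assigns to $S\subseteq \underline{r{-}1}$ the space $E(Q_{S\cup\{r\}},N)$, the target assigns $E(Q_S,N)$, and the connecting map is the restriction that forgets the embedding of $Q_r$.

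Each such restriction is a fibration, and its fibre over an embedding $e\colon Q_S\to N$ is naturally identified with $E(Q_r,\,N\setminus e(Q_S))$. By the standard cube-of-fibres principle from \cite{Goodwillie_CALC2}, the $r$-cube $E(Q_\bullet,N)$ is $k$-cartesian provided that for every $e_0\in E(Q_{\underline{r{-}1}},N)$ the induced $(r{-}1)$-cube
$$E(Q_r,\,N-e_0(Q_\bullet)),\qquad \bullet\subseteq\underline{r{-}1},$$
is $k$-cartesian. But this latter cube is precisely what Theorem \ref{E} addresses, applied with $P=Q_r$ (of handle dimension $q_r$) and the pairwise disjoint submanifolds $e_0(Q_1),\dots,e_0(Q_{r-1})$ of handle dimensions $q_1,\dots,q_{r-1}$ inside $N$.

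Theorem \ref{E} then yields that the fibre cube is $(1-q_r+\sum_{i=1}^{r-1}(n-q_i-2))$-cartesian, excepting only the case $n=3$ with $q_r=q_1=\cdots=q_{r-1}=1$---the very exception recorded in Theorem \ref{symm}. The arithmetic identity
$$1-q_r+\sum_{i=1}^{r-1}(n-q_i-2)\;=\;3-n+\sum_{i=1}^{r}(n-q_i-2)$$
matches the stated bound exactly, completing the reduction.

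The only point that requires monitoring is the cube-of-fibres principle itself: one needs each restriction to be an honest fibration (so strict and homotopy fibres agree) and one needs to assemble the conclusion uniformly over the base embedding $e_0$. Neither causes difficulty, and no part of the argument calls on the surgery or concordance techniques reserved for Theorem \ref{E}; all of the substance of the result sits in Theorem \ref{E} itself.
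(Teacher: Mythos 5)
Your reduction is correct and is essentially the paper's own argument: single out $Q_r$ as $P$, view $E(Q_\bullet,N)$ as a map of $(r{-}1)$-cubes via the restriction fibrations with fibres $E(Q_r,N-e(Q_S))$, apply Theorem \ref{E} fibrewise over each base embedding $e_0$, and verify the arithmetic identity and the matching of exceptional cases. Nothing further is needed.
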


Again this result includes as a special case the same statement with handle dimension replaced by dimension. Again the general statement follows easily from the codimension zero case. 

Let us show that Theorems \ref{E} and \ref{symm} are equivalent. We may assume codimension zero. Let $(N;Q_1,\dots ,Q_r)$ be as in Theorem \ref{symm} and single out $Q_r$ for special treatment. ($Q_r$ will be $P$.) For each $S\subset \underline {n{-}1}$ we have the fibration
$$
E(Q_S\cup Q_r,N)\to E(Q_S,N).
$$
Its fiber over a given point $e\in E(Q_S,N)$ is $E(Q_r,N-e(Q_S))$. View the $r$-cube mentioned in Theorem \ref{symm} as a map of $(r{-}1)$-cubes
$$
E(Q_\bullet\cup Q_r,N)\to E(Q_\bullet,N),
$$
where $\bullet$ now runs through subsets of $\underline {n{-}1}$. Whenever an embedding $e:Q_{\underline{r{-}1}}\to N$ is chosen, then $E(Q_\bullet,N)$ becomes an $(r{-}1)$-cube of based spaces, and the fibers over the base points form an $(r{-}1)$-cube isomorphic to $E(Q_r,N-e(Q_\bullet))$. 

The original $r$-cube is $k$-cartesian if and only if for each such choice of $e$ the $(r{-}1)$-cube of fibers is $k$-cartesian. 
Write $P=Q_r$ and $p=q_r$. Take $k$ to be
$$
3{-}n{+}\Sigma_{i=1}^r(n{-}q_i{-}2)=1{-}p{+}\Sigma_{i=1}^{r-1}(n{-}q_i{-}2).
$$
Thus the conclusion of Theorem \ref{symm} holds for 
\mbox{$(N,Q_1,\dots ,Q_{r-1},Q_r=P)$} if and only if, for every way of embedding the disjoint union of \mbox{$Q_1,\dots,Q_{r-1}$} in $N$, the conclusion of Theorem \ref{E} holds.

\subsection{Excision/Analyticity Formulation of the First Main Result}

The next result is also equivalent to Theorem \ref{symm}.
Again let $N$ be an $n$-manifold. Let $M$ be a compact $m$-manifold triad with $\partial_0M$ embedded in $\partial N$. Assume that $M$ contains compact $m$-manifold triads $Q_1,\dots ,Q_r$, disjoint from one another and from $\partial_0M$, with $\partial_1Q_i=Q_i\cap \partial_1M$. Let $q_i$ be the handle dimension of $Q_i$ relative to $\partial_0Q_i$. (For example, $Q_i$ might be a single handle, a tubular neighborhood of an $(m-q_i)$-disk in $M$ that is disjoint from $\partial_0M$ and transverse to $\partial_1M$.)
Let $Q_S$ be the union of the $Q_i$ for $i\in S$ and consider the $r$-dimensional cubical diagram $E(M-Q_\bullet ,N)$. 

\begin{bigthm} \label{excision}  Let $N$, $M$, $Q_i$, and $q_i$ be as above, with $r\ge 2$.
Then
the $r$-cubical diagram $E(M-Q_\bullet,N)$ is $(3{-}n{+}\Sigma_i(n{-}q_i{-}2))$-cartesian, except possibly in the case when $n=3$ and $q_1=\dots =q_r=1$.
\end{bigthm}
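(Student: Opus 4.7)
The plan is to deduce Theorem \ref{excision} from Theorem \ref{symm} by a fiber-over-basepoint argument, analogous to the equivalence between Theorems \ref{E} and \ref{symm} just established. Since $Q_S \subset Q_{\underline r}$ gives $M - Q_{\underline r} \subset M - Q_S$, restriction defines a natural map of $r$-cubes from $E(M - Q_\bullet, N)$ to the constant $r$-cube $E(M - Q_{\underline r}, N)$. Fix a basepoint $e^* \in E(M - Q_{\underline r}, N)$ and let $\mathcal F_\bullet$ denote the $r$-cube of homotopy fibers over $e^*$; the problem reduces to showing that $\mathcal F_\bullet$ is $(3 - n + \sum_i(n - q_i - 2))$-cartesian for every such $e^*$.

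The key identification is as follows. Because the $Q_i$ are pairwise disjoint and disjoint from $\partial_0 M$,
\[
M - Q_S \;=\; (M - Q_{\underline r}) \,\cup\, Q_{\underline r - S}, \qquad Q_{\underline r - S} \;=\; \bigsqcup_{i \notin S} Q_i,
\]
glued along $\sqcup_{i \notin S} \partial_0 Q_i$. An extension of $e^*$ to an embedding of $M - Q_S$ is therefore the same as an embedding of $Q_{\underline r - S}$ into the closed complement $N' := N \setminus \mathrm{int}\, e^*(M - Q_{\underline r})$, with each $\partial_0 Q_i$ sent to the prescribed submanifold of $\partial N'$ determined by $e^*$. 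Hence $\mathcal F_S \simeq E(Q_{\underline r - S}, N')$, and $\mathcal F_\bullet$ is precisely the $r$-cube whose cartesianness Theorem \ref{symm} controls, applied to the data $(N'; Q_1, \ldots, Q_r)$. Since handle dimension is intrinsic to the triad $(Q_i; \partial_0 Q_i, \partial_1 Q_i)$, both the bound and the excluded case $n = 3$, all $q_i = 1$, transfer verbatim.

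The only point requiring real care is the fiber identification itself: one must know that the restriction $E(M - Q_S, N) \to E(M - Q_{\underline r}, N)$ is a fibration whose fiber is the extension space described above. This is the parametrized isotopy extension argument standard in this subject, and matching up the boundary-triad data of each $Q_i$ with the hypotheses of Theorem \ref{symm} is routine since $\partial_0 Q_i \subset \partial(M - Q_{\underline r})$. With these in hand, Theorem \ref{excision} contains no substantive new content beyond Theorem \ref{symm}.
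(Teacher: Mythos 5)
Your argument is essentially the paper's own proof: it likewise views the cube as fibered over $E(M-Q_{\underline r},N)$ via the restriction fibrations, identifies the cube of fibers over a basepoint $e$ with $E(Q_\bullet,N')$ for $N'$ the closed complement of $e(M-Q_{\underline r})$, and then quotes Theorem \ref{symm}. The only difference is cosmetic: the paper first reduces to the codimension zero case $m=n$ (via normal disk bundles, details omitted), which your definition $N'=N\setminus \mathrm{int}\, e^*(M-Q_{\underline r})$ tacitly presupposes.
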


Note that in particular the cube is $(3{-}n{+}r(n{-}m{-}2))$-cartesian if $n{-}m\ge 3$.\\

Again the general statement follows easily from the codimension zero case (the special case in which $m=n$) by using normal disk bundles. We omit the argument. 

Let us show that Theorem \ref{symm} implies Theorem \ref{excision}. We may work in the codimension zero case. Let $N$, $M$, and $Q_i$ be as in Theorem \ref{excision} with $m=n$. For every $S$ we have a fibration
$$E(M-Q_S,N)\to E(M-Q_{\underline r},N).
$$
For any point $e\in E(M-Q_{\underline r},N)$ the fibers of these fibrations form a cube. The desired conclusion is equivalent to the assertion that for every choice of $e$ this cube of fibers is $(3{-}n{+}\Sigma_i(n{-}q_i{-}2))$-cartesian. But for every choice of $e$ we may write this cube of fibers as $E(Q_\bullet,N')$ where $N'$ is the closed complement of the image of $e$; thus the assertion follows from an instance of Theorem \ref{symm}. 

Conversely, Theorem \ref{symm} follows from Theorem \ref{excision}; in fact, any instance of Theorem \ref{symm} is related to an instance of Theorem \ref{excision} in the manner described above, for example by attaching an external collar $C$ to $N$ and letting $M$ be $C\cup Q_{\underline r}$. 

\subsection{A Formulation Using Moduli Spaces of Manifolds}

This version of the same result involves the following idea. For a smooth closed $(n{-}1)$-manifold $D$ we will define $\cal I(D)$, a moduli space for compact $n$-manifolds that have $D$ as boundary (\lq\lq interiors for $D$\rq\rq). The based loopspace of $\cal I(D)$ at $N$ is homotopy equivalent to the space of diffeomorphisms $N\to N$ fixed on the boundary $D$.
When $P$ has the same dimension as $N$ then the space $E(P,N)$ of codimension zero embeddings is equivalent to the homotopy fiber of a \lq\lq gluing-in-$P$\rq\rq\ map 
$$
\cal I\partial (N-P)\to \cal I(\partial N).
$$
This will be explained in detail in \S\ref{embeddings} and \S\ref{beginning}, including the relation with the results above, but here is the gist of it. Given such a $D$, and given $n$-dimensional triads $Q_1,\dots ,Q_r$ with the manifolds $\partial_0Q_i$ embedded disjointly in $D$, let $Q_S$ be the disjoint union $\cup_{i\in S}Q_i$ as before and let $D_S$ be the manifold that is obtained from $D$ by replacing $\partial_0Q_i$ with $\partial_1Q_i$ for each $i\in S$. Thus $D_S$ will be isomorphic to the boundary of the closed complement of any embedding of $Q_S$ in a manifold $N$ whose boundary is $D$. When $T\subset S$ there is a map $\cal I(D_S)\to\cal I(D_T)$ given by gluing in $Q_{S-T}$. This leads to an $r$-cube $\cal I(D_\bullet)$.

\begin{bigthm}\label{int}Let $D$ and $Q_i$ be as above with $r\ge 2$, and let $q_i$ be the handle dimension of $Q_i$ relative to $\partial_0Q_i$. Then the $r$-cube $\cal I(D_\bullet)$ is $(3{-}n{+}\Sigma_{i=1}^r(n{-}q_i{-}2))$-cartesian, except possibly in the case when $n=3$ and $q_1=\dots =q_r=1$.\end{bigthm}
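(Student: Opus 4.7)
The plan is to deduce Theorem \ref{int} from Theorem \ref{E} (equivalently Theorem \ref{symm} or \ref{excision}) via the dictionary between moduli of manifolds and codimension-zero embedding spaces promised in \S\ref{embeddings}--\S\ref{beginning}. Fix a basepoint by choosing an $n$-manifold $N_0$ with $\partial N_0 = D$ containing $Q_1,\dots,Q_r$ disjointly embedded in the specified manner (so $\partial_0 Q_i \subset D$). For $T\subset\underline r$, write $N_T$ for the closed complement of $Q_T$ in $N_0$; then $\partial N_T = D_T$, and $N_T$ represents a basepoint of $\cal I(D_T)$ compatible with all the gluing maps in the cube.

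View the $r$-cube $\cal I(D_\bullet)$ as a natural transformation of $(r{-}1)$-cubes
$$
\cal I(D_{\bullet\cup\{r\}})\to\cal I(D_\bullet),\qquad\bullet\subset\underline{r-1},
$$
each component map being \lq\lq glue in $Q_r$\rq\rq. By the key fiber identification recalled just before Theorem \ref{int}, the homotopy fiber of $\cal I(\partial(N_T - Q_r))\to\cal I(\partial N_T)$ at $N_T$ is naturally equivalent to the codimension-zero embedding space $E(Q_r,N_T)$. Consequently, up to equivalence, the total homotopy fiber of the $r$-cube $\cal I(D_\bullet)$ at $N_0$ agrees with the total homotopy fiber of the $(r{-}1)$-cube of pointwise fibers, namely the embedding cube $E(Q_r, N_0 - Q_\bullet)$ indexed over $\bullet\subset\underline{r-1}$.

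Now invoke Theorem \ref{E} with $P = Q_r$ (handle dimension $q_r$) and the $r{-}1$ disjointly embedded submanifolds $Q_1,\dots,Q_{r-1}$ of $N_0$: the cube $E(Q_r, N_0 - Q_\bullet)$ is $\bigl(1 - q_r + \sum_{i=1}^{r-1}(n - q_i - 2)\bigr)$-cartesian, with exceptional case $n = 3$, $q_1 = \dots = q_r = 1$ matching precisely the exception in Theorem \ref{int}. Elementary algebra gives
$$
1 - q_r + \sum_{i=1}^{r-1}(n - q_i - 2) = 3 - n + \sum_{i=1}^{r}(n - q_i - 2),
$$
so $\cal I(D_\bullet)$ has the desired cartesian-ness at the basepoint $N_0$.

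The main obstacle is twofold. First, the foundational input of \S\ref{embeddings}--\S\ref{beginning} is substantial: one must define $\cal I(D)$ as a well-behaved moduli space and establish, with sufficient naturality to iterate across the cube, the key fiber identification $\text{hofib}\bigl(\cal I(\partial(N - P))\to\cal I(\partial N)\bigr)\simeq E(P,N)$. Second, since a cube is $k$-cartesian only when the total homotopy fiber has the right connectivity at \emph{every} component of the relevant holim, one must verify that the computation above applies at every basepoint. Naturality of the fiber identification transports the argument to any basepoint arising from an element of $\cal I(D_\emptyset)$, and every relevant component is accessed in this way because any $n$-manifold with boundary $D_S$ extends to one with boundary $D$ by gluing in the $Q_i$'s for $i \in S$.
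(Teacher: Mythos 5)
Your proposal is correct and is essentially the paper's own route: the paper obtains Theorem \ref{int} precisely by identifying, for every compatible family of basepoints in the gluing cube (all of which arise, as you note, from some manifold with boundary $D$, i.e.\ from a point of $\cal I(D_{\underline{r-1}})$ after gluing in the $Q_i$), the cube of homotopy fibers of the gluing maps with a codimension-zero embedding cube and quoting Theorem \ref{E}, whose proof is independent of the sharp Theorem \ref{int}. The only difference is cosmetic: the paper factors the reduction through Theorem \ref{symm} in two fiberwise steps, whereas you take fibers of the gluing-in-$Q_r$ map of $(r{-}1)$-cubes in a single step.
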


This is equivalent to Theorem \ref{symm} because for any $N\in \cal I(D)=\cal I(D_\emptyset)$ the cube formed by the homotopy fibers of the maps $\cal I(D_S)\to \cal I(D)$ is homotopy equivalent to $E(Q_\bullet,N)$.

For the remainder of the paper we will sometimes use the abbreviation
$$
\Sigma=\Sigma_i(n{-}q_i{-}2).
$$

\subsection{The Second Main Result}

Where Theorem \ref{E} concerns spaces of embeddings alone, Theorem \ref{EF} compares spaces of embeddings with spaces of all (continuous or smooth) maps.

When $\partial_0P$ is embedded in $\partial N$, let $F(P,N)$ denote the space of continuous maps
from $P$ to $N$ that restrict to the given embedding on $\partial_0 P$. Thus there is a map $E(P,N)\to F(P,N)$. Given also submanifolds $Q_i$ as in Theorem \ref{E}, 
there is a map of $r$-cubes
$$
E(P,N - Q_\bullet) \to F(P,N - Q_\bullet).
$$
This map, regarded as an $(r{+}1)$-cube,
will be called $\EF(P,N - Q_\bullet)$.

According to Theorem \ref{E} the cube $E(P,N - Q_\bullet)$ is $(1{-}p{+}\Sigma)$-cartesian in all but the one exceptional case. Recall (Remark \ref{F}) that the cube $F(P,N - Q_\bullet)$ is $(1{-}p{+}\Sigma)$-cartesian in any case.

\begin{bigthm}\label{EF}Let $N$, $P$, and $Q_1,\dots ,Q_r$ be as Theorem \ref{E}. Then
the $(r{+}1)$-cube $\EF(P,N - Q_\bullet)$
is $(n{-}2p{-}1 {+}\Sigma_i(n{-}q_i{-}2))$-cartesian, except possibly in the case when $n{=}3$, $p{=}1$, and $q_i{=}1$ for all $i$.
\end{bigthm}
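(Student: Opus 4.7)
The plan is to prove Theorem \ref{EF} by simultaneous induction with Theorem \ref{E}, following the same pattern of reductions. First I would reduce to the codimension-zero case via normal bundles and then, by handle induction on $P$ and on each $Q_i$, reduce to the case where $(P;\partial_0P,\partial_1P)\cong(D^p;S^{p-1},\emptyset)$ and each $Q_i$ is similarly a $q_i$-disk, with boundary data specified as in the reductions following Theorem \ref{E}. The base case $r=0$ is Haefliger's theorem that $E(P,N)\to F(P,N)$ is $(n{-}2p{-}1)$-connected when $n-p\geq 3$; very low codimension instances are absorbed by the weaker estimate from Remark \ref{vw}.

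The main step is to factor the comparison $E(P,N-Q_\bullet)\to F(P,N-Q_\bullet)$ through the $r$-cube of Poincar\'e embedding spaces $E^{PD}(P,N-Q_\bullet)$. The multirelative Poincar\'e disjunction proved in \cite{Goodwillie-Klein} controls the $(r{+}1)$-cube $E^{PD}(P,N-Q_\bullet)\to F(P,N-Q_\bullet)$ with a cartesian bound at least $(n{-}2p{-}1{+}\Sigma)$; the Poincar\'e analogue is essentially a higher Blakers-Massey calculation and needs no dimensional restriction. It then remains to handle the $(r{+}1)$-cube $E(P,N-Q_\bullet)\to E^{PD}(P,N-Q_\bullet)$, whose total homotopy fibers are expressible in terms of concordance embedding spaces $\CE$. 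The multiple disjunction theorem for concordance embeddings from \cite{thesis} supplies the same $(n{-}2p{-}1{+}\Sigma)$-cartesian estimate for this second $(r{+}1)$-cube, and combining the two estimates by the standard composition-of-connectivities principle for cubes yields the required bound on $\EF(P,N-Q_\bullet)$.

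The main obstacle will be orchestrating the joint induction so that at each stage the three inputs, namely Theorem \ref{E} at strictly smaller complexity, the Poincar\'e disjunction of \cite{Goodwillie-Klein}, and the concordance disjunction of \cite{thesis}, can each be invoked with the correct numerics. In particular, the gap of $n-p-2$ per attached handle between the bound of Theorem \ref{E} and the bound of Theorem \ref{EF} must be realized exactly by the connectivity improvement afforded by passing from embeddings to concordance embeddings when comparing $E\to E^{PD}$. Low handle-codimension cases ($n-p=2$ or $n-q_i=2$) will require supplementary bookkeeping based on Remark \ref{vw}, and the exceptional case $n=3$, $p=q_i=1$ must be propagated as an excluded input through every reduction, since the concordance disjunction inherits the same exclusion.
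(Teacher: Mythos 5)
There is a genuine gap, and it sits at the heart of your strategy: the Poincar\'e-level input you invoke does not exist at the strength you need. The main theorem of \cite{Goodwillie-Klein} gives (in the codimension $\ge 3$ range, not ``with no dimensional restriction'') that the cube of Poincar\'e interiors is $(2{-}n{+}\Sigma)$-cartesian, which translates into the statement that $E^{h}(P,N-Q_\bullet)$ is only $({-}p{+}\Sigma)$-cartesian --- off by one from the sharp embedding estimate --- and the corresponding comparison of Poincar\'e embeddings with the function-space cube is nowhere near $(n{-}2p{-}1{+}\Sigma)$-cartesian; that sharp Poincar\'e statement is precisely a conjecture, not a theorem. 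It is also not ``essentially a higher Blakers--Massey calculation'': the elementary Blakers--Massey/transversality argument only yields the much weaker estimate $1{-}rp{+}\Sigma$ of Remark \ref{vw}, and the actual result of \cite{Goodwillie-Klein} is the hard homology-truncation theorem. Consequently your plan of composing a $(n{-}2p{-}1{+}\Sigma)$-cartesian estimate for $E^{h}\to F$ with a $(n{-}2p{-}1{+}\Sigma)$-cartesian estimate for $E\to E^{h}$ cannot be carried out: the first factor is unavailable, and the second is also misdescribed, since the passage from smooth to Poincar\'e embeddings is not controlled by concordance theory alone --- it requires the finiteness, simple-homotopy, and surgery steps (the last demanding $n\ge 5$ and giving only $\infty$-cartesianness of the block-to-simple comparison), with the concordance disjunction of \cite{thesis} controlling only the smooth-to-block comparison, and only to order $n{-}p{-}2{+}\Sigma$.

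Because the Poincar\'e input is off by one, this chain can only ever produce the weak estimates (the $({-}p{+}\Sigma)$-cartesian Theorem \ref{E} and the $(n{-}2p{-}2{+}\Sigma)$-cartesian Theorem \ref{EF}); the sharp statements require the bootstrap that your proposal omits. The actual route is: the weak Theorem \ref{EF}, together with the fact that $F(P,N-Q_\bullet)$ is $(1{-}p{+}\Sigma)$-cartesian and the inequality $n{-}2p{-}2{+}\Sigma\ge 1{-}p{+}\Sigma$ valid when $n{-}p\ge 3$, recovers the sharp Theorem \ref{E}; and then the sharp Theorem \ref{EF} is deduced from the sharp Theorem \ref{E} purely formally by the handle-splitting argument of Lemma \ref{handlesplit} --- induction over a handle decomposition of $P$, splitting a $p$-handle as $P_-\cup P_0\cup P_+$, and applying Theorem \ref{E} in the case $(n,p,p,q_1,\dots,q_r)$ with $P_-$ treated as an extra submanifold $Q_{r+1}$ --- not by re-running the smooth-to-Poincar\'e comparison. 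Your closing remark that ``the gap of $n{-}p{-}2$ per handle must be realized by the connectivity improvement from concordance embeddings'' gestures at this but does not supply the mechanism; without the bootstrap and the handle-splitting lemma, the sharp bound $n{-}2p{-}1{+}\Sigma$ is not reached.
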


\begin{rem}
The $r {=} 0$ case of Theorem \ref{EF} is easy to prove by transversality. 
The $r{=}1$ case appears in a paper of
 Hatcher and Quinn \cite[th.~1.1]{Hatcher-Quinn} 
 (and \cite[th.\ 4.1]{Hatcher-Quinn} for
the families version; see also \cite[thm.~11.1]{Klein-Williams}). 
\end{rem}
\begin{rem}
A variant of Theorem \ref{EF} is obtained by replacing
each function space $F(P,N-Q_S)$ by the analogous space of smooth
immersions $I(P,N-Q_S)$. The conclusion of Theorem \ref{EF} is valid also for the $(r{+}1)$-cube $E(P,N-Q_\bullet)\to I(P,N-Q_\bullet)$, because by immersion theory the inclusion $I(P,N-Q_\bullet)\to F(P,N-Q_\bullet)$ is $\infty$-cartesian.
\end{rem}

It is clear that Theorem \ref{EF} implies Theorem  \ref{E} as long as $n{-}p\ge 2$ (since in that case $n{-}2p{-}1{+}\Sigma\ge1{-}p{+}\Sigma)$). We will also see that Theorem \ref{E} implies Theorem  \ref{EF}. It would be pleasant to simply prove one or the other of Theorems \ref{E} and \ref{EF} and then deduce the other from it. Instead, for various reasons, we will find ourselves needing to go back and forth between the two statements in the course of proving them both. Thus we will need to pay attention to which cases of Theorem \ref{E} imply which cases of Theorem \ref{EF} and vice versa.

\begin{rem}\label{leeway}If $n{-}p\ge 3$, so that $n{-}2p{-}1{+}\Sigma>1{-}p{+}\Sigma$, then we can say more. For one thing, the connectivity estimate for $E(P,N-Q_\bullet)$ must then be sharp if it is also sharp for $F(P,N - Q_\bullet)$ (as it often is). For another, we do not need the full strength of
Theorem \ref{EF} to deduce Theorem \ref{E} in that case. This will be useful in \S\ref{summing-up}.  \end{rem}

\begin{lem}\label{handlesplit}Theorem \ref{E} implies Theorem \ref{EF}. 
\end{lem}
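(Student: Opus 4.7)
My plan is to induct on the number of handles in a handle decomposition of $P$ relative to $\partial_0 P$, reducing Theorem \ref{EF} to the case in which $P$ is a single disk, where Theorem \ref{E} and classical general position are enough. The base case is $P = \partial_0 P \times I$, a collar: then $E(P,N{-}Q_S)$ and $F(P,N{-}Q_S)$ are contractible for every $S$, so $\EF(P,N{-}Q_\bullet)$ is trivially $\infty$-cartesian.

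For the inductive step, write $P = A \cup H$ with $H$ a handle of index $p' \leq p$. The restriction maps $E(P, N{-}Q_S) \to E(A, N{-}Q_S)$ and $F(P, N{-}Q_S) \to F(A, N{-}Q_S)$ are fibrations, so one obtains an $(r{+}2)$-cube $\EF(P, N{-}Q_\bullet) \to \EF(A, N{-}Q_\bullet)$. The target cube is cartesian to the required extent by the inductive hypothesis (using that the handle dimension of $A$ is at most $p$). Passing to homotopy fibers over a chosen basepoint embedding $e \colon A \to N{-}Q_{\underline r}$ reduces matters to proving the analogous statement for $H$ alone in the complement $N' = N \setminus e(A \setminus \partial_0 A)$. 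A further normal-bundle reduction, modeled on the one used in \S\ref{ThmA} for Theorem \ref{E}, reduces the single-handle case to $P = D^{p'}$: on the $E$-side the restriction $E(D(\xi),\cdot) \to E(D^{p'},\cdot)$ has fiber a bundle-isomorphism space independent of $S$, and on the $F$-side the analogous restriction has contractible fiber because $D(\xi)$ deformation retracts onto its zero section rel boundary.

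For $P = D^{p'}$, Theorem \ref{E} gives that $E(D^{p'}, N{-}Q_\bullet)$ is $(1{-}p'{+}\Sigma)$-cartesian, and Remark \ref{F} gives the same estimate for $F(D^{p'}, N{-}Q_\bullet)$. Vertex-wise, each map $E(D^{p'}, N{-}Q_S) \to F(D^{p'}, N{-}Q_S)$ is $(n{-}2p'{-}1)$-connected by the $r = 0$ case of Theorem \ref{EF}, which is classical transversality. It remains to combine these three inputs to see that the $(r{+}1)$-cube $\EF(D^{p'}, N{-}Q_\bullet)$ is $(n{-}2p'{-}1{+}\Sigma)$-cartesian; I would carry this out by examining the $r$-cube of vertex-wise homotopy fibers of the maps $E \to F$ and applying a cubical Blakers-Massey estimate. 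Since $p' \leq p$, the resulting bound is at least $(n{-}2p{-}1{+}\Sigma)$-cartesian, as required.

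The main obstacle is this final cube-theoretic combination: one must gain $(n{-}p'{-}1)$ degrees of cartesianness over what Theorem \ref{E} and Remark \ref{F} give alone, and this gain is exactly the Whitney general-position improvement propagated through the $2^r$ vertices of the cube. Additional care is needed with compatible basepoint choices used to identify fibers of the restriction fibrations consistently across subsets $S$, and with the bookkeeping between $p'$ (the actual handle index) and $p$ (the target bound), especially during the normal-bundle reduction from $H$ to $D^{p'}$.
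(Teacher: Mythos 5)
Your outer induction on handles and the normal-bundle reduction to a disk are consistent with the paper, and your base case is fine. But there is a genuine gap at exactly the step you flag as the "main obstacle": the single-handle case. The three inputs you list --- $E(D^{p'},N-Q_\bullet)$ is $(1{-}p'{+}\Sigma)$-cartesian (Theorem \ref{E}), $F(D^{p'},N-Q_\bullet)$ is $(1{-}p'{+}\Sigma)$-cartesian (Remark \ref{F}), and each vertex map $E\to F$ is $(n{-}2p'{-}1)$-connected (transversality) --- do not formally combine to give that $\EF(D^{p'},N-Q_\bullet)$ is $(n{-}2p'{-}1{+}\Sigma)$-cartesian. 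The formal lemmas about a map of $r$-cubes give roughly the minimum of the two cartesianness numbers (a map of $k$-cartesian cubes is about $(k{-}1)$-cartesian as an $(r{+}1)$-cube), i.e.\ something like $({-}p'{+}\Sigma)$, and vertexwise connectivity of the maps adds nothing beyond that. Your proposed remedy, "a cubical Blakers-Massey estimate" applied to the $r$-cube of vertexwise homotopy fibers, has no input to run on: Blakers-Massey needs cocartesianness data, and the cube of fibers of $E\to F$ carries no evident (strongly) cocartesian structure. If such a formal combination existed, Theorem \ref{EF} would be a corollary of Theorem \ref{E}, Remark \ref{F} and the $r{=}0$ case, and no further idea would be needed; the whole content of the lemma is that it is not.

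The missing idea is the handle-splitting (handle-cutting) argument. In the paper one cuts the $p$-handle $P=D^p\times D^{n-p}$ into $P_-\cup P_0\cup P_+$ along the core-disk direction, so that $P_0$ is a $(p{-}1)$-handle and $P_\pm$ are $p$-handles, and argues by downward induction on the handle index. The square of $\EF$-cubes has two contractible corners, the $P_0$-corner is controlled by the inductive hypothesis, and the decisive point is that the $E$-square, after passing to fibers, becomes the inclusion $E(P_+,N'-(Q_\bullet\cup P_-))\to E(P_+,N'-Q_\bullet)$, to which Theorem \ref{E} applies with $P_-$ playing the role of an extra submanifold $Q_{r+1}$ of handle dimension $p$; this is where the extra $(n{-}p{-}2)$ beyond $(1{-}p{+}\Sigma)$ is gained, yielding $(n{-}2p{-}1{+}\Sigma)$. (Incidentally, your stated gain $(n{-}p'{-}1)$ should be $(n{-}p'{-}2)$.) A secondary, repairable imprecision: in your handle induction the fiber on the $F$-side is $F(H,N-Q_S)$, not a mapping space into the complement $N'$, so the map of fiber cubes is not literally $\EF(H,N'-Q_\bullet)$; the paper factors it through $F(H,N-(A\cup Q_\bullet))$ and uses the generalized Blakers-Massey theorem to compare $F(H,N-(A\cup Q_\bullet))$ with $F(H,N-Q_\bullet)$. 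Without the splitting trick, however, the proof as proposed does not go through.
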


\begin{proof}Without loss of generality $P$ is $n$-dimensional. Consider first the case when $P$ is a single $p$-handle, and argue by induction with respect to $p$. 

If $p=0$, so that $P$ may be taken to be a tubular neighborhood of a point, then $\EF(P,N-Q_\bullet)$ is $\infty$-cartesian. In fact, the fiber of $E(P,N-Q_S)\to F(P,N-Q_S)$ is equivalent to $O(n)$, independent of $S$. 

Now suppose that $p>0$, and assume the result for $p-1$. Decompose the handle $P=D^p\times D^{n-p}$ 
into three pieces $P_- \cup P_0 \cup P_+$ by cutting $D^p$ along two parallel $(p{-}1)$-planes. We obtain a square diagram of $(r{+}1)$-cubes
$$
\xymatrix{
\EF(P,N - Q_\bullet) \ar[r]\ar[d] & \EF(P_0 \cup P_+,N - Q_\bullet)\ar[d] \\
\EF(P_0 \cup P_-,N - Q_\bullet) \ar[r] & \EF(P_0,N - Q_\bullet)\, .
}
$$
The lower left
and upper right cubes consist of contractible spaces, because 
$P_0 \cup P_\pm$ is a collar on $\partial_0(P_0 \cup P_\pm)=(P_0\cup P_\pm)\cap\partial N$.
Both of these cubes are therefore $\infty$-cartesian. Since $P_0$ is a $(p{-}1)$-handle, the lower right hand cube is
$(n {-} 2(p{-}1) {-} 1 {+}\Sigma)$-cartesian, by
induction on $p$. It follows that the right hand arrow (an $(r{+}2)$-cube) is $(n {-} 2p {+}\Sigma)$-cartesian. 

The next claim is that the
$(r{+}3)$-cube given by the displayed square diagram
 is $(n{-}2p{-}1 {+}\Sigma)$-cartesian. This will imply that the left hand arrow is $(n{-}2p{-}1 {+}\Sigma)$-cartesian, and therefore that the same is true for the upper left cube.

To establish this claim, view the $(r{+}3)$-cube as a map from
$$
\xymatrix{
E(P,N - Q_\bullet) \ar[r] \ar[d] & E(P_0 \cup P_+,N - Q_\bullet)\ar[d]\\
E(P_0 \cup P_-,N - Q_\bullet) \ar[r] & E(P_0,N - Q_\bullet)\, .
}
$$
to 
$$
\xymatrix{
F(P,N - Q_\bullet) \ar[r] \ar[d] & F(P_0 \cup P_+,N - Q_\bullet)\ar[d] \\
F(P_0 \cup P_-,N - Q_\bullet) \ar[r]  & F(P_0,N - Q_\bullet)\, 
}.
$$
The second of these displayed squares (really $(r{+}2)$-cubes) is $\infty$-cartesian, because for each fixed index $S \subset
\underline r$ the corresponding square of spaces is $\infty$-cartesian. 
We use Theorem \ref{E} to show that the first square is $(n{-}2p{-}1 {+}\Sigma)$-cartesian. Fix a point in $E(P_0 \cup P_-,N - Q_{\underline r})$ and consider the fibers of the vertical maps in the square. This leads to an inclusion map of $r$-cubes
$$
E(P_+,N' - ( Q_\bullet \cup P_-)) \to 
E(P_+,N'- Q_\bullet) \, .
$$
Here $N'$ is the closed complement of the $(p-1)$-handle $P_0$ in $N$. Note that $P_-$ is a $p$-handle in $N'$ disjoint from each $Q_i$. Now apply Theorem \ref{E}, treating $P_-$ as one more submanifold $Q_{r+1}$. It follows that the $(r+1)$-cube above is $(1{-}p {+} \Sigma +(n{-}p{-}2))$-cartesian.

We complete the proof of the lemma by reducing to the case in which $P$ is a single handle, inducting on the number
of handles in a handle decomposition for $P$. Suppose that $P=A \cup H$ is the effect of attaching an index $p$ handle 
$H$ to $\partial_1 A$ (so that $H \cong D^{n-p} {\times} D^{p}$ meets $\partial_1 A$ transversely
at $\partial_0 H \cong D^{n{-}p} {\times} S^{p{-}1}$). We deduce the conclusion for $P$ from the conclusion for $A$. Consider the diagram of cubes
$$
\xymatrix{
E(A \cup H, N-Q_\bullet) \ar[r]\ar[d] & F(A\cup H, N- Q_\bullet) \ar[d] \\
E(A, N-Q_\bullet) \ar[r] &F(A, N- Q_\bullet)\, .
}
$$
Our assumption is that the $(r+1)$-cube defined
by the lower horizontal arrow is $(n{-}2p{-}1 {+} \Sigma)$-cartesian.
To prove the same connectivity statement for the upper horizontal
arrow, it is enough to consider the fibers of the vertical arrows and to
verify that the induced map of fibers is 
an $(n{-}2p{-}1 {+} \Sigma)$-cartesian  $(r{+}1)$-cube.
This has to be checked for every choice of basepoint in
$E(A,N - Q_{\underline r})$.

The fiber of the left-hand arrow is $E(H,N-(A \cup Q_\bullet)) $ and that of the right-hand arrow is $F(H,N-Q_\bullet)$. (Here the space $F(H,N-Q_S)$ may be a little unexpected; it consists of the maps $H\to N-Q_S$ with prescribed restriction to $\partial_0H$, but the prescribed map $\partial_0H\to N-Q_S$ does not go into the boundary.)
The map between them may be factored as
$$
E(H,N-(A\cup Q_\bullet)) \to 
F(H,N-(A\cup Q_\bullet))\to 
F(H,N-Q_\bullet)\, .
$$
The second of these maps is $(n{-}2p{-}1 {+} \Sigma)$-cartesian because
$$
N-(A\cup Q_\bullet)\to N-Q_\bullet
$$
is $(n{-}p{-}1 {+} \Sigma_i)$-cartesian
by the generalized Blakers-Massey theorem.
The first of them is $(n{-}2p{-}1 {+} \Sigma)$-cartesian by the $(n,p,p,q_1,\dots ,q_r)$ case of Theorem \ref{E}. 
\end{proof}

\begin{rem}\label{cases}The proof above shows that Theorem \ref{EF} holds for a given set of dimensions $(n,p,q_1,\dots ,q_r)$ provided that Theorem \ref{E} holds for 
$(n,p,p,q_1,\dots ,q_r)$ and more generally for $(n,p',p',q_1,\dots ,q_r)$ for all $p'\le p$. In contrast, the logical equivalence between Theorems \ref{E}, \ref{symm}, \ref{excision}, and \ref{int} was more straightforward, with never any change in the number of submanifolds or their handle dimensions.\end{rem}

\begin{rem}\label{calculus}The pattern of the inductive argument used in the first half of the proof above is one which lies at the heart of functor calculus. A very similar argument (again involving what might be called a downward induction on $r$) appears in the proof of the `First Derivative Criterion', Theorem 5.2 of \cite{Goodwillie_CALC2}. The first author long ago learned about the usefulness of splitting a $p$-handle into a $(p-1)$-handle and two $p$-handles from \cite{BLR}.\end{rem}

\subsection{More conventions}

The categories we consider are not small, so their nerves are not simplicial sets. For a brief discussion of some ways of working around this difficulty, see \cite{Goodwillie-Klein}.

We will not always 
distinguish between a category and its classifying space (= the realization of
its nerve). A functor $\cal A \to \cal B$
is said to be a weak equivalence if after taking 
realizations (of nerves) it becomes a homotopy equivalence. 
Similarly, it is $r$-connected if it becomes an $r$-connected map of 
spaces after realization.

\subsection{Outline}
 
Here is a schematic outline of the proof of the first main result in cases when all handle codimensions $n{-}p$ and $n{-}q_i$ are at least three. Consider the chain of 
forgetful maps
$$
\text{smooth} \quad \longrightarrow\quad
\text{block} \quad \longrightarrow \quad
\text{simple} \quad \longrightarrow \quad
\text{finite} \quad\longrightarrow \quad
\text{PD} 
$$
from smooth embeddings to block embeddings to simple Poincar\'e embeddings to finite Poincar\'e embeddings to Poincar\'e embeddings.  
The main result of \cite{Goodwillie-Klein} was an analogue of Theorem \ref{int} for Poincar\'e complexes. Here we easily deduce the corresponding statement for finite Poincar\'e complexes and then the corresponding statement for simple Poincar\'e complexes. We pass from the simple Poincar\'e statement
to the corresponding block statement using surgery theory
(compare \cite[th.\ 3.4.1]{GKW}).  Finally, to get to Theorem \ref{int} itself,
we use a multirelative connectivity statement from concordance theory, the main result of the first author's thesis \cite{thesis} (see \cite[\S 3.5]{GKW} for an outline
of two different proofs).  (Actually for this last step we switch from the point of view of  Theorem \ref{int} to that of Theorem \ref{symm}.)

In fact, the story
is a little more roundabout than this. One reason is that our Poincar\'e analogue of Theorem \ref{int} does not have the sharp estimate $3{-}n+\Sigma$; it is off by one dimension. We follow the scheme above to obtain
a weak (off by one) Theorem \ref{E}. From this we deduce 
a correspondingly weak Theorem \ref{EF}. From the weak Theorem \ref{EF} we get the sharp Theorem \ref{E}, and from that we get the sharp Theorem \ref{EF}. The other reason is that the surgery step requires manifolds to have dimension at least five, which makes for a bit of extra work in low dimensional cases.

Here is the organization of the paper.
\S\ref{embeddings} introduces the moduli spaces and spaces of embeddings that are the subject of the paper. In \S\ref{beginning} we lay out the five-step process outlined above. In \S\ref{PD-step} we pass from Poincar\'e complexes to finite Poincar\'e complexes. In \S\ref{simplestep} we pass from finite Poincar\'e complexes to simple Poincar\'e complexes. In \S\ref{surgery-step}
we pass from simple Poincar\'e complexes to the block world. In \S\ref{pseudoisotopy-step}
we go from block embeddings to smooth embeddings. In \S \ref{summing-up} we complete the proof of the main results in all cases where $n{-}p\ge 3$ and $n{-}q_i\ge 3$, making use of the result mentioned in Remark \ref{vw}. In \S\ref{cod2} we deal with the remaining cases. There are two appendices, one on Waldhausen's generalization
of Quillen's Theorems A and B, and the other on the obstruction
to making a finite Poincar\'e complex simple within its homotopy type.

\subsection{History of the results}For many years the first author believed (and stated) that he could prove most of this. But when he tried to write down a proof he found that some maps that should have been $k$-connected could only be shown to be (in the terminology of Lemma \ref{almost} below) \it almost\rm\ $k$-connected; a new idea was needed to obtain surjectivity on $\pi_0$. The second author came to the rescue with the homology truncation method, the main tool of \cite{Goodwillie-Klein}. 

\begin{acks} The second author is indebted to Bruce Williams for conversations 
about simple Poincare complexes and the block isotopy extension theorem. 
\end{acks}

\section{The Spaces}\label{embeddings}

Much of the time we will not be working directly with spaces of embeddings. Instead, we will translate statements about these into equivalent statements about certain moduli spaces. 

For a smooth closed manifold $D$ we will define the space $\cal I(D)$ of interiors.
We give an analogous definition of $\cal I^b(D)$ (the block space of interiors).
We also recall from \cite{Goodwillie-Klein} the definition of the space of interiors $\cal I^h(D)$ in the realm of Poincar\'e complexes, and we make analogous definitions for finite Poincar\'e complexes and for simple Poincar\'e complexes. 

\subsection{The manifold case}

Suppose that $D$ is a smooth closed manifold of dimension $n-1$.  We will be considering compact manifolds having boundary $D$, or more precisely compact manifolds equipped with a diffeomorphism between $D$ and the boundary. 
 
 \begin{defn} 
The simplicial groupoid $\cal I_\bullet(D)$ is defined as follows: 
in any simplicial degree $k$ the objects of $\cal I_k(D)$ are the compact
smooth $n$-manifolds $M$ having boundary $D$. A morphism $M \to M'$ is a diffeomorphism $M \times \Delta^k \to M' \times \Delta^k$ that commutes with projection to $\Delta^k$ and
restricts to the identity map on $D \times \Delta^k$.
\end{defn}

For an object $N$ the simplicial group consisting of the $\cal I_\bullet(D)$-morphisms from $N$ to $N$ will be denoted by $\Diff_\bullet(N)$.

\begin{rem} \label{moduli} (The nerve of) $\cal I_\bullet (D)$ is a classifying space for bundles of manifolds such that the fiberwise boundary is the trivial bundle with fiber $D$. The loopspace of $\cal I_\bullet(D)$ at an object $N$ may be identified
with the simplicial group $\Diff_\bullet(N)$. In fact, there is a contractible space $E_\bullet(N)$, the nerve of another simplicial groupoid, that fibers over $\cal I_\bullet (D)$ in such a way that the fiber over $N$ is $\Diff_\bullet(N)$.  This is an instance of a simple general fact about simplicial groupoids. The simplicial groupoid $E_\bullet(N)$ is defined as follows. An object in degree $k$ is an object of $\cal I_k(D)$ together with an $\cal I_k(D)$-isomorphism to $N$, and any two objects are uniquely isomorphic.
\end{rem}

Now suppose that $(N;\partial N)$ is a smooth compact $n$-manifold with boundary and $(P;\partial_0 P,\partial_1 P)$ is a smooth compact $n$-manifold triad. Fix a smooth embedding of $\partial_0 P$ in $ \partial N$, and identify $\partial_0P$ with its image.
We consider smooth embeddings $f\:P \to N$ that fix $\partial_0 P$ pointwise, and such that $f^{-1}(\partial N)=\partial_0P$.

\begin{defn}
The {\it  embedding space}  
$
E(P,N)
$
is the geometric realization of the simplicial set
whose $k$-simplices are families
of such embeddings parametrized smoothly by $\Delta^k$. Thus a $k$-simplex can be described as an embedding of $P {\times} \Delta^k$
in $N {\times} \Delta^k$ relative to $\partial_0 P \times \Delta^k$ that is compatible with projection
to $\Delta^k$. 
\end{defn}

The simplicial set
$E(P,N)$ is fibrant. \\

There is a variant of this definition in which a collar $\partial N\times I\subset N$ and a collar $\partial_0\times I\subset P$ are given and one allows only those embeddings  of $P$ in $N$ such that there exists a neighborhood of $\partial_0 P$ that is pointwise fixed. This gives a simplicial subset of $E(P,N)$ having the same
homotopy type. A reference for this material is \cite[app.~1]{BLR}.\\

We can describe the space $E(P,N)$ in terms of spaces of manifolds as follows: Given $N$ and $P$ as above, let $\partial_2 N$ be the closed complement of $\partial_0P$ in $\partial N$. Consider the closed $(n-1)$-manifold $\partial_1P\cup \partial_2N$. (It is not quite smooth in the usual sense: its two parts meet along a crease, as in the boundary of an $n$-manifold triad.) For any embedding of $P$ in $N$ relative to $\partial_0P$, the boundary of the closed complement of the image is $\partial_1P\cup \partial_2N$, so we will sometimes informally write the latter as $\partial(N-P)$ even if no embedding of $P$ in $N$ has been chosen. We claim that there is a fibration sequence up to homotopy 
$$
E(P,N)\to \cal I\partial(N-P)\to \cal I(\partial N).
$$
The map $\cal I\partial(N-P)\to \cal I(\partial N)$ is given by a functor that takes a manifold whose boundary is $\partial_1P\cup \partial_2N$ and glues it to $P$ along $\partial_1P$. The assertion is that the homotopy fiber over the vertex $N$ of $\cal I(\partial N)$ is weakly equivalent to $E(P,N)$. To prove it, we recall the contractible space $E(N)$ over $\cal I(\partial N)$ and argue that the fiber product 
$$
\cal I\partial(N-P)\times_{\cal I(\partial N)}E(N)
$$
is equivalent to $E(P,N)$. The fiber product is a simplicial groupoid (with different object sets in different dimensions) in which there is at most one map between any two objects. An object in degree $k$ consists of a manifold $X$ with boundary $\partial(N-P)$ together with a fiber-preserving diffeomorphism $(P\cup_{\partial_1P}X)\times \Delta^k\cong N\times \Delta^k$. The space $E(P,N)$ is isomorphic to a skeletal subcategory of this.  

\subsection{The block case}

Let $D$ be as above. We define the block analogue of $\cal I(D)$.

\begin{defn} 
The simplicial groupoid $\cal I^b_\bullet(D)$ is defined as follows: 
in any simplicial degree $k$ the objects of $\cal I_k(D)$ are the compact
smooth $n$-manifolds $M$ having boundary $D$. A morphism $M \to M'$ is a diffeomorphism $M \times \Delta^k \to M' \times \Delta^k$ that preserves each face $M\times \partial_j\Delta^k$ and restricts to the identity map on $D \times \Delta^k$.
\end{defn}

\begin{rem}
It is clear how to define face operators. Degeneracy operators may appear problematic, but in fact there is no difficulty. See pages 120-121 of \cite{BLR}.
\end{rem}

For an object $N$ we have its simplicial group of automorphisms, the block diffeomorphism group $\Diff^b_\bullet(N)$. By the same reasoning as in the ordinary manifold case, using a contractible space $E^b(N)$, this is equivalent to the loop space of $\cal I^b_\bullet(D)$ at $N$.\\

For $N$ and $P$ as above, we also have the block analogue of $E(P,N)$:

\begin{defn}
The {\it  block embedding space}  
$
E^b(P,N)
$
is the geometric realization of the simplicial set
whose $k$-simplices are families
of face-preserving embeddings $P\times \Delta^k\to N\times \Delta^k$ fixed on $\partial_0P$. 
\end{defn}

Like
$E(P,N)$, this is fibrant.

\begin{rem}The inclusion 
 $E(P,N) \to E^b(P,N)$ induces a surjection on $\pi_0$ but it is rarely
a homotopy equivalence. In fact, in general it does not give a bijection on $\pi_0$; two embeddings can be concordant without being isotopic. In the codimension three case that we will mostly be concerned with here (when the handle dimension of $P$ relative to $\partial_0P$ is at most $n-3$), it does in fact give a bijection of $\pi_0$, by a theorem of Hudson \cite{Hudson}. (See also \S\ref{pseudoisotopy-step} below.)
\end{rem}

We cannot assert a fibration sequence
$$
E^b(P,N)\to \cal I^b\partial(N-P)\to \cal I^b(\partial N)
$$
in general, but we can do so when the handle codimension of $P$ is at least three. The issue is that the complement of a face-preserving embedding $P\times \Delta^k\to N\times \Delta^k$ might not be diffeomorphic to anything of the form $X\times \Delta^k$. But in the codimension three case it must be, because Hudson's theorem implies that every face-preserving embedding $P\times \Delta^k\to N\times \Delta^k$ is isotopic through face-preserving embeddings $P\times \Delta^k\to N\times \Delta^k$ to the product of an embedding $P\to N$ and the identity.

\subsection{The Poincar\'e duality case}

In \cite{Goodwillie-Klein} we defined $\cal I^h(D)$, the analogue of $\cal I(D)$ for Poincar\'e  complexes, and showed that its loopspace at an object $N$ is equivalent to the space of homotopy equivalences $N\to N$ fixed on the boundary. We used a fibration sequence
$$
E^h(P,N)\to \cal I^h\partial(N-P)\to \cal I^h(\partial N)
$$
as definition of the space of Poincar\'e embeddings. Let us recall the construction.\\

If $j\: A\to B$ is a morphism in the category $\cal T$ of topological spaces, let $\cal T(j)$
be the category of factorizations of $j$.  An object of $\cal T(j)$
consists of a space $X$ together with maps $i\:A\to X$ and $p\: X \to B$ such
that $p\circ i = j$. A morphism $(X,i,p) \to (X',i',p')$ is a map $f\: X\to X'$
such that $f\circ i = i'$ and $p'\circ f = p$. We sometimes write $X$ for $(X,i,p)$,
and we write $\cal T(A\to B)$ when $j$ is understood. A morphism is called a weak equivalence
(cofibration, fibration) if it is a weak equivalence (cofibration, fibration) of spaces.
This is a model structure. Let $w\cal T(j) \subset \cal T(j)$ be the subcategory having all of the objects but having only the weak equivalences as morphisms.

\begin{defn} If $D$ is a Poincar\'e space of formal dimension $n-1$, then 
$\cal I^h(D)$ is the full subcategory of  $w\cal T(D \to \ast)$ consisting of those
objects $X$ such that $D \to X$ satisfies relative $n$-dimensional duality.
(This means that $(\bar X,D)$ is a Poincar\'e pair, where $\bar X$ is the mapping cylinder
of $D\to X$. There is no finiteness or simple-homotopy requirement in the definition of Poincar\'e pair.) 
\end{defn}

According to Lemma 2.7 of \cite{Goodwillie-Klein}, the loopspace $\Omega_X\cal I^h(D)$ at the vertex $X$ is homotopy equivalent to the space $\Aut^h(X)$ of all homotopy equivalences $X\to X$ that are fixed on the boundary $D$, as long as $D\to X$ is a cofibration.

If $(P;\partial_0P,\partial_1P)$ and $(N;\partial_0P,\partial_2N)$ are CW Poincar\'e triads of the same formal dimension, then there is a gluing-in-$P$ functor 
$$
\mathcal I^h\partial(N-P):=\mathcal I^h(\partial_1P\cup\partial_2N)\to \mathcal I^h(\partial N).
$$

Although we do not really need it here, we also recall:

\begin{defn} If $(P;\partial_0P,\partial_1P)$ and $(N;\partial_0P,\partial_2N)$ are CW Poincar\'e triads of the same formal dimension, then the Poincar\'e embedding space $E^h(P,N)$ is defined as the homotopy fiber of $\mathcal I^h\partial(N-P)\to \mathcal I^h(\partial N)$ with respect to the vertex $N$. 
\end{defn}

\subsubsection{The finite variant}\label{finite}

If the Poincar\'e space $D$ is a finite CW complex, then we define $\cal I^f(D) \subset
\cal I^h(D)$ to be the full subcategory whose objects are finite Poincar\'e pairs $(X,D)$. By a finite Poincar\'e pair we just mean a finite CW pair that satisfies relative Poincar\'e duality.

Clearly the nerve of $\cal I^f(D)$ is equivalent to the nerve of the full subcategory of $\cal I^h(D)$ whose objects are finite up to homotopy (weakly equivalent in $\cal T(D\to *)$ to finite Poincar\'e pairs). Thus the inclusion map $\cal I^f(D) \to
\cal I^h(D)$ is essentially the inclusion of an open and closed subset, and its homotopy fiber with respect to an object $X\in \cal I^h(D)$ is either contractible (if $X$ is finite up to homotopy) or empty (if it is not). 

If the triads $P$ and $N$ are finite then the gluing functor \newline
$\mathcal I^h(\partial(N-P))\to \mathcal I^h(\partial N)$ takes $\mathcal I^f(\partial(N-P))$ into $ \mathcal I^f(\partial N)$. We might again refer to its homotopy fiber as an embedding space and denote it by $E^f(P,N)$. It is equivalent to a union of components of $E^h(P,N)$, and it will follow from \S\ref{PD-step} that when the handle codimension of $P$ is at least three then it is equivalent to all of $E^h(P,N)$.

\subsubsection{The simple homotopy variant}

Now suppose that the finite Poincar\'e complex $D$ is simple. Let $\cal I^s(D)\subset \cal I^f(D)$ have for objects the simple Poincar\'e pairs $(X,D)$ and for morphisms the simple homotopy equivalences. (For details concerning simple Poincar\'e pairs, see the Appendix \S\ref{simple}.)

Just as $\Omega_X\cal I^h(D)$ is equivalent to the space $\Aut^h(X)$ of homotopy equivalences, $\Omega_X\cal I^s(D)$ is equivalent to the open and closed subspace $\Aut^s(X)$ of simple homotopy equivalences. Thus $\cal I^s(D)$ is essentially a covering space of $\cal I^f(D)$; the forgetful map $i:I^s(D)\to I^f(D)$ gives an isomorphism $\pi_j(\cal I^s(D),X)\to \pi_j(\cal I^s(D),X)$ for $j\ge 2$ and an injection for $j{=}1$.

In other words, for any $X\in \cal I^f(D)$ the homotopy fiber of $i$ with respect to $X$ is homotopically discrete. To describe the components of the fiber we use Quillen's Theorem B.  The homotopy fiber is equivalent to the left fiber $i/ X$, since a weak equivalence $X\to X'$ fixed on $D$ induces a weak equivalence of left fibers (see \cite{Goodwillie-Klein}). The components of the left fiber correspond canonically with the equivalence classes (if any) of simple finiteness structures on $(X,D)$ in the sense of the Appendix \S\ref{simple}.

The lemma below is proved in the Appendix. Let $\Wh(X) = \Wh(\pi_1(X))$ be the Whitehead group. Define the
norm map
$$\frak N\: \Wh(\pi_1(X))\to \Wh(\pi_1(X))
$$
by $\frak N(x)= x + (-1)^n x^*$, where $x\mapsto x^*$ is the 
canonical involution twisted by the orientation bundle. Consider the Whitehead torsion $\tau(X,D)\in \Wh(X)$ of the duality map.
\begin{lem} \label{I^s}  
Let $(X,D)$ be a finite Poincar\'e pair and assume that the boundary $D$ is simple. Then $(X,D)$ admits a simple structure if and only if $\tau(X,D)$ belongs to the image of $\frak N$. When a simple structure exists, the set of equivalence classes of such structures has a natural free transitive action of the kernel of $\frak N$. \end{lem}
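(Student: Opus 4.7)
The plan is to reduce the lemma to a standard bookkeeping computation showing how the Whitehead torsion of the Poincaré duality chain equivalence transforms under a change of simple structure. I would begin by recalling from the appendix what a simple (finiteness) structure on a finite Poincaré pair $(X,D)$ means: having fixed the simple structure on $D$, a simple finiteness structure on $(X,D)$ is an equivalence class of a simple homotopy equivalence $Y \to X$ rel $D$ with $Y$ a finite CW complex, where two such equivalences are equivalent if they differ (rel $D$) by a simple homotopy equivalence. By the standard theory (Wall's finiteness obstruction and the classification of simple structures), the set of finiteness structures on $X$ rel $D$ is a nonempty torsor for $\Wh(\pi_1(X))$: acting by $x \in \Wh(\pi_1(X))$ replaces the given class by the one represented by a homotopy equivalence of torsion $x$.

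The heart of the argument is to compute how the Whitehead torsion $\tau_s(X,D)$ of the Poincaré duality chain map
\[
[X]\cap -\colon C^{n-*}(\tilde X,\tilde D)\longrightarrow C_*(\tilde X)
\]
varies as the simple class $s$ on $(X,D)$ changes. Shifting $s$ by $x\in \Wh(\pi_1(X))$ shifts the preferred simple class of the codomain $C_*(\tilde X)$ by $x$, while the preferred simple class of the domain $C^{n-*}(\tilde X,\tilde D)$ is shifted by $(-1)^n x^*$ — the dualization functor converts a change of basis by $x$ into one by $x^*$ under the involution (twisted by the orientation bundle of $X$), and the degree shift by $n$ introduces the sign $(-1)^n$. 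Since the Whitehead torsion of a chain equivalence changes by (codomain change) $-$ (domain change), we obtain the transformation law
\[
\tau_{s+x}(X,D) \;=\; \tau_s(X,D) \;-\; \frak N(x).
\]
This is essentially a classical computation going back to Wall; I would give it carefully using a standard model of the cap product, appealing to the multiplicativity and stability properties of Whitehead torsion to track preferred bases.

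With the transformation law in hand, the lemma is immediate. A simple Poincaré structure on $(X,D)$ is exactly a simple finiteness structure $s$ with $\tau_s(X,D)=0$. Such an $s$ exists if and only if there is $x\in \Wh(\pi_1(X))$ with $\tau(X,D) = \frak N(x)$, i.e.\ $\tau(X,D) \in \image(\frak N)$. When a simple structure exists, the set of solutions $x$ is a coset of $\ker(\frak N)$; moreover distinct elements $x$ produce distinct (that is, inequivalent) simple finiteness structures, since the full set of simple structures on $X$ rel $D$ is a free $\Wh(\pi_1(X))$-torsor. Consequently the set of equivalence classes of simple Poincaré structures on $(X,D)$ carries a free and transitive action of $\ker(\frak N)$, as claimed.

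The main obstacle is the sign-and-involution calculation in the middle step: one must check that the two basis changes combine to give the norm map $\frak N(x)=x+(-1)^n x^*$ rather than some variant, and in particular that the involution used is correctly twisted by the orientation bundle of $X$. Once this is nailed down, the rest of the argument is bookkeeping around the torsor structure on finiteness structures.
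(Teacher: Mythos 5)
Your proposal is correct and follows essentially the same route as the paper's Appendix II proof: the $\Wh(X)$-torsor of finiteness structures rel $D$, together with the transformation law for the torsion of the duality map under a change of structure (which the paper derives from the homotopy-commutative square of chain equivalences and the composition formula for torsion), yielding existence iff $\tau(X,D)\in\operatorname{im}\frak N$ and a free transitive $\ker\frak N$-action on simple structures. One caveat on the step you yourself flag: as written, your bookkeeping (codomain class shifted by $x$, domain by $(-1)^n x^*$, torsion changing by codomain minus domain) gives $x-(-1)^nx^*$ rather than $\pm\frak N(x)$, so the signs must be redistributed so that the two contributions combine additively to the norm map, as in the paper's computation.
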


If the finite triads $P$ and $N$ are simple then restriction of the gluing functor $\mathcal I^f(\partial(N-P))\to \mathcal I^f(\partial N)$ gives a map $\mathcal I^s(\partial(N-P))\to  \mathcal I^s(\partial N)$. We might refer to its homotopy fiber as an embedding space and denote it by $E^s(P,N)$. The map $E^s(P,N)\to E^f(P,N)$ is essentially a covering space, and we will see in \S\ref{simplestep} that in the codimension three case it is an equivalence.

\subsection{Comparison maps}

We need to use maps
$$
\cal I_\bullet(D) \to \cal I_\bullet^b(D)\to\cal I^s(D) \to \cal I^f(D) \to \cal I^h(D).
$$
The first arrow and the last two are inclusion maps that have already been mentioned. There are two little difficulties in defining an \lq\lq inclusion\rq\rq\ $\cal I_\bullet^b(D)\to\cal I^s(D)$.

The first issue is that a smooth manifold does not have a preferred cell structure. But it does have a preferred equivalence class of finiteness structures (in the sense of Appendix \S\ref{simple}), since every smooth triangulation gives a cell structure and since the identity map provides a simple homotopy equivalence between any two of these. Introduce an equivalent but larger version of $\cal I^s(D)$: An object is now a Poincar\'e pair $(N,D)$ together with an equivalence class of finiteness structure such that the resulting finite Poincar\'e pair is simple. A morphism is a homotopy equivalence of pairs (restricting to the identity on $D$) such that the resulting equivalence between finite complexes is simple. \\

The second issue is that $\cal I^s(D)$ is a category (simplicial set) rather than a simplicial category (bisimplicial set). This is easily remedied by introducing another simplicial direction:

\begin{defn} $\cal I^{s}_\bullet(D)$ is the simplicial category
which in every simplicial degree $k$ has the same objects as $\cal I^s(D)$, and in which
a morphism $X \to X'$ is a simple homotopy equivalence $X \times \Delta^k \to X'\times \Delta^k$ that preserves the projection to $\Delta^k$ and restricts to the identity on $D \times \Delta^k$.  The simplicial category $\cal I^{bs}_\bullet(D)$ is the block version of this, in which morphisms are required to preserve faces but not the projection.
\end{defn}

\begin{lem} The inclusion functors $\cal I^s(D) \to \cal I^s_\bullet (D)\to\cal I^{bs}_\bullet(D)$
are weak equivalences.
\end{lem}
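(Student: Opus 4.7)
My plan is to prove each inclusion is a weak equivalence by exhibiting a retraction together with a simplicial homotopy, realized through an affine deformation of morphisms and the prism triangulation of $\Delta^k\times I$.

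\emph{Step 1: $i\colon \cal I^s(D) \to \cal I^s_\bullet(D)$.} Regarding the source as constant in the simplicial direction, define $r\colon \cal I^s_\bullet(D)\to \cal I^s(D)$ by restricting a projection-preserving morphism $\phi=(\alpha,\mathrm{pr}_{\Delta^k})\colon X\times\Delta^k\to X'\times\Delta^k$ to its initial vertex $e_0$, so that $r(\phi)=\phi|_{e_0}$. Clearly $r\circ i=\id$. For the other composite, use the family
\[
H_s(\phi)(x,u)=\bigl(\alpha(x,\rho_s u),\,u\bigr),\qquad s\in[0,1],
\]
where $\rho_s\colon\Delta^k\to\Delta^k$ is the affine deformation retraction onto $e_0$. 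Each $H_s(\phi)$ is projection-preserving, with fibre over $u$ the simple equivalence $\phi|_{\rho_s u}$, hence a morphism of $\cal I^s_k(D)$; at the endpoints $H_0=\id$ and $H_1=i\circ r$. Triangulating the prism $\Delta^k\times I$ as $(k{+}1)$-simplices reinterprets the continuous family $\{H_s\}$ as simplices in the bisimplicial nerve, realizing a simplicial homotopy from $\id$ to $i\circ r$.

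\emph{Step 2: $\cal I^s_\bullet(D)\to \cal I^{bs}_\bullet(D)$.} Projection-preserving implies face-preserving, so this is an inclusion. Define a retraction by \emph{straightening}: send $\phi=(\alpha,\beta)$ to $\phi'=(\alpha,\mathrm{pr}_{\Delta^k})$. To see $\phi'$ is a simple equivalence, note that for $u$ in a face $\sigma\subset\Delta^k$, the composite
\[
X\hookrightarrow X\times\sigma\xrightarrow{\phi|_{X\times\sigma}}X'\times\sigma\twoheadrightarrow X'
\]
equals $\alpha(-,u)$; each factor is simple (using collapsibility of $\sigma$ and face-preservation of $\phi$), so by 2-out-of-3 the fibre $\alpha(-,u)$ is a simple equivalence, whence $\phi'$ is fibrewise simple over $\Delta^k$ and therefore simple. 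The affine interpolation
\[
\phi_s=\bigl(\alpha,\,(1{-}s)\beta + s\,\mathrm{pr}_{\Delta^k}\bigr),\qquad s\in[0,1],
\]
is face-preserving for every $s$ (convexity of each face of $\Delta^k$); a further homotopy $F_r(x,u)=(\alpha(x,\rho_r u),\,\rho_r((1{-}s)\beta(x,u)+su))$ exhibits $\phi_s$ as homotopic to the composite $\iota_{e_0}\circ\phi|_{e_0}\circ\pi_X$, which is a simple equivalence, so by homotopy-invariance of Whitehead torsion each $\phi_s$ is simple. The prism triangulation again realizes $\{\phi_s\}$ as a simplicial homotopy in $\cal I^{bs}_\bullet(D)$ between $\id$ and the composite of straightening with the inclusion.

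The main technical point is Step 2: verifying that straightening and the affine interpolation preserve the simple-equivalence property. This rests on the fibrewise-over-$\Delta^k$ description of simple equivalences together with the collapsibility of simplices and the homotopy-invariance of Whitehead torsion; once these ingredients are in place, the prism triangulation converts the affine deformations into genuine simplicial homotopies in the bisimplicial nerves, yielding the required weak equivalences of classifying spaces.
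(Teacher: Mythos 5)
Your Step 1 does not work as written, and the failure is essential rather than cosmetic. The retraction ``evaluate at the initial vertex'' commutes with the face operators $d_i$ for $i\ge 1$ but not with $d_0$, because the coface $\delta^0\colon\Delta^{k-1}\to\Delta^k$ does not preserve $e_0$; likewise $\rho_s\circ\delta^j\neq\delta^j\circ\rho_s$, so the family $H_s$ is only a degreewise-defined continuous family, not simplicial homotopy data, and ``prism triangulation'' cannot repair the failure to commute with the simplicial operators. Worse, your construction is hom-wise, and no hom-wise contraction onto $\cal I^s(D)$ can exist: the hom simplicial set $\hom_{\cal I^s_\bullet(D)}(X,X')$ is a singular-type model for the \emph{space} of simple homotopy equivalences $X\to X'$ rel $D$, which is in general not homotopy discrete, whereas $\hom_{\cal I^s(D)}(X,X')$ is a discrete set of maps. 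So the first inclusion being a weak equivalence is a genuine theorem about classifying spaces of categories of weak equivalences versus their simplicial enrichments; the paper does not reprove it but cites \cite{Goodwillie-Klein}, and it cannot be obtained by the formal retraction-plus-affine-homotopy you propose.

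Step 2 has a parallel gap: your ``straightening'' $(\alpha,\beta)\mapsto(\alpha,\mathrm{pr}_{\Delta^k})$ is not a functor. For composable $\phi=(\alpha,\beta)$ and $\psi=(\alpha',\beta')$, the straightening of $\psi\circ\phi$ is $(x,u)\mapsto(\alpha'(\alpha(x,u),\beta(x,u)),u)$, while the composite of the straightenings is $(x,u)\mapsto(\alpha'(\alpha(x,u),u),u)$; these differ unless $\beta=\mathrm{pr}_{\Delta^k}$, so no map of nerves is induced, and the affine family $\phi_s$ again fails to be simplicial homotopy data (you also quietly use that the restriction of a block morphism to a face is a simple equivalence of that face, which must be built into the definition or argued). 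The paper's route avoids all of this: since $\cal I^s_\bullet(D)$ and $\cal I^{bs}_\bullet(D)$ have the same objects in every degree, it suffices to show that each inclusion of hom simplicial sets $\hom_{\cal I^s_\bullet(D)}(X,X')\to\hom_{\cal I^{bs}_\bullet(D)}(X,X')$ is a weak equivalence, because the nerve is then a degreewise equivalence of bisimplicial sets; and that hom-wise statement is exactly what convexity buys, the block hom splitting as the product of the projection-preserving hom with a contractible factor of face-preserving self-maps of $\Delta^k$. Your straightening intuition is pointing at that splitting, but to make it a proof you should work hom by hom (where no functoriality is needed) rather than trying to retract the whole simplicial category.
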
 

\begin{proof}
For the first inclusion see for example \cite{Goodwillie-Klein}. For the second, note that for any objects $X$ and $X'$ the inclusion of simplicial sets $\hom_{\cal I^s_\bullet(D)}(X,X')\to \hom_{\cal I^{bs}_\bullet}(D)(X,X')$ is a weak equivalence. (The latter is isomorphic to the product of the former with another simplicial set, and this other factor is contractible because the space of face-preserving continuous maps $\Delta^k\to \Delta^k$ is convex.)
\end{proof}

Now there is an inclusion map $\cal I^b_\bullet(D)\to \cal I^{bs}_\bullet(D)\sim \cal I^s(D)$. Simplifying the notation, we write
$$
\cal I(D) \to \cal I^b(D)\to\cal I^s(D) \to \cal I^f(D) \to \cal I^h(D).
$$

\section{Beginning of the Main Proof\label{beginning}}

We now begin proving Theorem \ref{int} in the case when all handle codimensions are at least three.

Thus suppose we have a closed $(n-1)$-manifold $D$, a finite collection of $n$-dimensional compact manifold triads
$(Q_1,\dots,Q_r)$, and disjoint embeddings of the $\partial_0 Q_i$ in $D$.

For each $S\subset \underline r$ define the triad $Q_S$ to be the disjoint union $\cup_{i\in S} Q_i$.
Let $D_S$ be the closed $(n-1)$-manifold obtained from $D$ by replacing $\partial_0 Q_i$ with $\partial_1 Q_i$ for each $i \in S$.  
If $T\subset S$ then we
have a map $\cal I^h(D_S) \to\cal I^h(D_T)$ given by gluing in $Q_{S-T}$. 
The resulting $r$-cubical diagram $S \mapsto \cal I^h(D_S)$ will be denoted
by $\cal I^h(D_\bullet)$. (The diagram does not strictly commute, but we gave a preferred method in \cite{Goodwillie-Klein} for rectifying
the cube to a strictly commutative one by replacing each $\cal I^h(D_S)$ by something weakly equivalent.
In the interest of clarity of exposition we will ignore that detail here.)\\

The same can be done with the spaces $\cal I(D_S)$, and with everything in between; we have maps of cubes
$$
\cal I(D_\bullet) \to \cal I^b(D_\bullet) \to \cal I^s(D_\bullet) \to \cal I^f(D_\bullet) \to 
\cal I^h(D_\bullet) \, .
$$
The next few sections of this paper are devoted to proving that, as long as $n{-}q_i\le 3$ for all $i$, the $r$-cube $\cal I(D_\bullet)$ is $(2{-}n{+}\Sigma)$-cartesian. Later this number will be improved to $(3{-}n{+}\Sigma)$. \\

The starting point is the main result of \cite{Goodwillie-Klein}:

\begin{thm}[cf.\ {\cite[th.\ 7.1]{Goodwillie-Klein}}]If $D$ and $Q_1,\dots ,Q_r$ are as above, with $r\ge 2$ and $n{-}q_i\ge 3$ for all $i$, then the $r$-cube $\cal I^h(D_\bullet)$ is $(2{-}n{+}\Sigma)$-cartesian.
\end{thm}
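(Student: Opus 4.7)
The plan is to translate the statement into an equivalent one about Poincar\'e embedding spaces and then to attack the resulting cube by combining the higher Blakers--Massey theorem with an obstruction-theoretic analysis of Poincar\'e factorizations. First, for any vertex $N \in \cal I^h(D_\emptyset)$, the homotopy fiber over $N$ of the gluing map $\cal I^h(D_S) \to \cal I^h(D_\emptyset)$ is by definition the Poincar\'e embedding space $E^h(Q_S, N)$. Consequently the $r$-cube $\cal I^h(D_\bullet)$ is $(2-n+\Sigma)$-cartesian if and only if the $r$-cube $E^h(Q_\bullet, N)$ is $(2-n+\Sigma)$-cartesian for every such $N$, so it suffices to prove the latter. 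This is the Poincar\'e analogue of the passage from Theorem \ref{int} to Theorem \ref{symm}.

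Next, I would reduce to the case in which each $Q_i$ is a Poincar\'e handle $D^{q_i} \times D^{n-q_i}$ by induction on a handle decomposition of each $Q_i$. Given $Q_i = A_i \cup H_i$ with $H_i$ of index at most $q_i$, a Poincar\'e embedding of $Q_i$ is built from one of $A_i$ together with a Poincar\'e embedding of $H_i$ in the complement, in the spirit of the handle-by-handle reductions carried out after Theorem \ref{E} and in Lemma \ref{handlesplit}. Each such step preserves the target connectivity $2-n+\Sigma$.

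For the handle case, I would compare $E^h(Q_\bullet, N)$ with two simpler cubes. The cube $N - Q_\bullet$ of complements is, by the higher Blakers--Massey theorem as invoked in Remark \ref{F}, $(1+\Sigma)$-cartesian. The cube of spinal map spaces $F(D^{q_\bullet}, N - Q_{\underline r \setminus \bullet})$, decorated with the appropriate spherical-fibration data representing normal bundles, receives a forgetful map from $E^h(Q_\bullet, N)$ whose fiber records the complementary Poincar\'e pair structure. Chasing these comparisons with a higher Blakers--Massey argument on the spine maps yields the desired $(2-n+\Sigma)$-cartesian estimate on $\pi_j$ for $j \ge 1$; the codimension hypothesis $n - q_i \ge 3$ enters here to guarantee that the normal spherical fibrations are simply connected and that the embedding-of-disks obstruction theory is unobstructed.

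The main obstacle, and the step that I expect to absorb essentially all of the real work, is surjectivity on $\pi_0$: given a coherent system of Poincar\'e embeddings on every proper sub-cube, one must produce a Poincar\'e embedding at the initial vertex. A na\"\i ve obstruction-theoretic argument using only Blakers--Massey loses one dimension, because the gluing data required to assemble a global Poincar\'e pair $Q_{\underline r} \cup C \simeq N$ is not automatically encoded in the spinal maps together with a Poincar\'e complement. To close this gap I would invoke the homology truncation method credited to the second author in the historical remarks above: by truncating the candidate complements to an appropriate homological range, the obstruction to building the missing top-vertex embedding from the sub-cube data lives in a group that vanishes precisely in the range required by $2-n+\Sigma$. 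Making this truncation natural in the $r$-cube, compatible with the gluing-in-$Q_{S-T}$ functors and with the Spivak normal structures on the $Q_i$, is the technical heart of the argument.
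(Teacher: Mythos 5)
This statement is not proved in the present paper at all: it is imported verbatim from the companion paper \cite{Goodwillie-Klein} (Theorem 7.1 there), and everything after it in \S\ref{beginning} treats it as a black box. So a ``proof'' of it amounts to reproducing the main content of \cite{Goodwillie-Klein}, and your proposal does not do that; it is an outline whose two load-bearing steps do not go through as stated. First, your reduction to the case where each $Q_i$ is a single handle is modelled on the smooth arguments in \S\ref{ThmA} and Lemma \ref{handlesplit}, but those arguments rest on restriction fibrations $E(Q_i,N)\to E(A_i,N)$, i.e.\ on the isotopy extension theorem. In the Poincar\'e category there is no such structure: $E^h(P,N)$ is defined only as the homotopy fiber of a gluing map $\cal I^h\partial(N-P)\to\cal I^h(\partial N)$, a Poincar\'e embedding has no well-defined restriction to a sub-handle, and the complement is part of the data rather than something one can ``take.'' The absence of isotopy extension in the Poincar\'e world is precisely why \cite{Goodwillie-Klein} has to argue with moduli spaces $\cal I^h(D_\bullet)$ and gluing functors rather than with a handle induction, so this step of your plan is not a simplification but a gap.

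Second, you correctly identify from the History remark that $\pi_0$-surjectivity is the crux and that the homology truncation method of \cite{Goodwillie-Klein} is what resolves it, but you then assert, without identifying any obstruction theory, that ``the obstruction to building the missing top-vertex embedding from the sub-cube data lives in a group that vanishes precisely in the range required by $2-n+\Sigma$.'' No such single obstruction group is exhibited, and this is not how the actual argument runs: the proof in \cite{Goodwillie-Klein} is a lengthy induction in which complements are replaced by fiberwise homology truncations compatible with the Spivak normal data and the gluing maps, and establishing the cartesianness estimate (which, note, is $2-n+\Sigma$, already one less than the sharp $3-n+\Sigma$ one might hope for) occupies the bulk of that paper. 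Declaring this step ``the technical heart'' and deferring it means the proposal, as written, proves nothing beyond what the Blakers--Massey theorem already gives (cf.\ Remarks \ref{F} and \ref{vw}), which is strictly weaker. If your goal is only to use the statement, the correct move is the one the paper makes: cite \cite{Goodwillie-Klein} and proceed.
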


To conclude that $\cal I(D_\bullet)$ is $(2{-}n{+}\Sigma)$-cartesian under the same hypotheses, it is then enough to show that each of the four arrows above, regarded as an $(r{+}1)$-cube, is $(2{-}n{+}\Sigma)$-cartesian. This will be done in the next four sections. In fact, we will find that $\cal I(D_\bullet) \to \cal I^b(D_\bullet)$ is $\Sigma$-cartesian and that the other three are $\infty$-cartesian. \\

\section{The Finiteness Step \label{PD-step}}

\begin{lem}
If $D$ and $Q_1,\dots ,Q_r$ are as in \S\ref{beginning}, with $r\ge 1$ and with $n{-}q_1\ge 3$, then the map $\cal I^f(D_\bullet)\to \cal I^h(D_\bullet)$, regarded as an $(r{+}1)$-cube, is $\infty$-cartesian. 
\end{lem}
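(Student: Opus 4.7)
The inclusion $\cal I^f(D) \to \cal I^h(D)$ is, up to equivalence, the inclusion of a union of path components, namely those whose objects are finite up to homotopy. Hence each map $\cal I^f(D_S) \to \cal I^h(D_S)$ in the cube has homotopy fibers that are either contractible or empty, depending on whether the basepoint is finite up to homotopy.

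For the $(r{+}1)$-cube to be $\infty$-cartesian it suffices that, for each basepoint $X$ of the initial vertex $\cal I^h(D_{\underline r})$ with images $X_S = X \cup Q_{\underline r \setminus S} \in \cal I^h(D_S)$ under the cube maps, the $r$-cube $F_\bullet$ of homotopy fibers $F_S = \mathrm{hofib}(\cal I^f(D_S) \to \cal I^h(D_S); X_S)$ is $\infty$-cartesian. The plan is to establish this by showing that the edges of $F_\bullet$ along the $Q_1$-direction are weak equivalences; any $r$-cube with all edges in one direction being weak equivalences is $\infty$-cartesian, since its total homotopy fiber, computed iteratively starting with that direction, vanishes.

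For $1 \notin S$, the $Q_1$-edge $F_{S \cup \{1\}} \to F_S$ between empty-or-contractible spaces is an equivalence iff $X_{S \cup \{1\}}$ is finite up to homotopy iff its gluing $X_S = X_{S \cup \{1\}} \cup_{\partial_1 Q_1} Q_1$ is. The forward implication is immediate, since gluing finite CW complexes along a finite subcomplex yields a finite complex. For the converse one applies Wall's finiteness obstruction: the hypothesis $n - q_1 \ge 3$ implies that, dually, $Q_1$ is built from $\partial_1 Q_1 \times I$ by attaching handles of index $\ge n - q_1 \ge 3$. Hence $X_S$ is obtained from $X_{S \cup \{1\}}$, up to homotopy, by attaching finitely many cells of dimension $\ge 3$, which leaves $\pi_1$ unchanged. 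Under the resulting isomorphism on $\tilde K_0(\mathbb Z[\pi_1])$, the Wall obstructions $w(X_{S \cup \{1\}})$ and $w(X_S)$ correspond, so $X_S$ is finite up to homotopy iff $X_{S \cup \{1\}}$ is.

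The main technical obstacle is the Wall-obstruction comparison under the attachment of cells of dimension $\ge 3$; this rests on the additivity of the finiteness obstruction computed from the cellular chain complex of the universal cover, together with the codimension-three hypothesis $n - q_1 \ge 3$ ensuring $\pi_1$-invariance so that the two obstructions live in canonically identified $K$-theory groups. Notably, only the single hypothesis $n - q_1 \ge 3$ is required, because $Q_1$ is privileged: making just one direction of $F_\bullet$ consist of equivalences already suffices.
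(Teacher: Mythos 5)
Your proposal is correct and essentially reproduces the paper's argument: the fibers of $\cal I^f(D_S)\to\cal I^h(D_S)$ are empty or contractible, and the only substantive point is that gluing in $Q_1$ preserves and detects homotopy finiteness, which you establish exactly as the paper does, via the $2$-connectivity of $X\to X\cup Q_1$ coming from $n-q_1\ge 3$ (so a $\pi_1$-isomorphism) together with the identification of Wall finiteness obstructions. The only difference is organizational: the paper reduces to the $r=1$ case (the squares in the $Q_1$ and finiteness directions) by repeatedly using that a map between $\infty$-cartesian cubes is $\infty$-cartesian, whereas you first pass to the cube of fibers over a compatible basepoint system and note that its $Q_1$-direction edges are weak equivalences -- the same reduction carried out in a different order.
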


\begin{proof}
By repeatedly using the fact that a cube is $\infty$-cartesian if it can be viewed as a map between two $\infty$-cartesian cubes, it is enough to show that for each $S\subset\lbrace 2,\dots,r\rbrace$ the square 
$$
\xymatrix{
\cal I^f(D_{S\cup\lbrace 1\rbrace}) \ar[r] \ar[d] & \cal I^h(D_{S\cup\lbrace 1\rbrace})\ar[d] \\
\cal I^f(D_S) \ar[r] & \cal I^h(D_S) 
}
$$
is $\infty$-cartesian. In other words, it suffices to consider the case when $r=1$.\\

Thus suppose we have a single triad $Q$ with handle dimension at most $n{-}3$ and an embedding $\partial_0Q\subset D$, and write $D'$ for the result of substituting $\partial_1Q$ for $\partial_0Q$ in $D$. We have to show that for every Poincar\'e space $X$ with boundary $D$ the canonical map of homotopy fibers
$$ 
\fib_X(\cal I^f(D')\to \cal I^h(D')\to \fib_{X\cup Q}(\cal I^f(D)\to \cal I^h(D)
$$
is a weak equivalence.

As noted in \ref{finite}, these homotopy fibers are either empty or weakly contractible. We have to rule out the possibility that the first is empty and the second is not. That is, we must show that if $X\cup Q$ is homotopically finite then $X$ is homotopically finite. 

This uses the handle dimension assumption, which insures that the pair $(Q,X\cap Q)=(Q,\partial_1 Q)$ is $2$-connected. The map $X\to X\cup Q$ is then also $2$-connected. In particular, it induces an isomorphism of fundamental groups. The Wall finiteness obstruction for $X$ vanishes because it can be identified with the corresponding obstruction for $X\cup Q$.
\end{proof}

\section{The Simple Homotopy Step}\label{simplestep}

\begin{lem}
If $D$ and $Q_1,\dots ,Q_r$ are as in \S\ref{beginning}, with $r\ge 1$ and with $n{-}q_1\ge 3$, then the map $\cal I^s(D_\bullet)\to \cal I^f(D_\bullet)$, regarded as an $(r{+}1)$-cube, is $\infty$-cartesian. 
\end{lem}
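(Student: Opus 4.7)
The plan is to mirror the structure of the preceding finiteness-step lemma. First, by the same \lq\lq cube as a map of cubes\rq\rq\ reasoning used there, one reduces to the case $r=1$: a single triad $Q$ with $n-q\ge 3$ and the resulting square
$$
\xymatrix{
\cal I^s(D') \ar[r]\ar[d] & \cal I^f(D')\ar[d] \\
\cal I^s(D) \ar[r] & \cal I^f(D)
}
$$
whose $\infty$-cartesianness must be verified.

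Next, I would exploit the fact, recorded in the preceding subsection, that $\cal I^s \to \cal I^f$ is \emph{essentially a covering}: its homotopy fibers are homotopically discrete. The square is therefore $\infty$-cartesian if and only if for every $X \in \cal I^f(D')$ the induced map from the discrete homotopy fiber over $X$ to the discrete homotopy fiber over $X\cup Q$ is a bijection. By Lemma \ref{I^s}, each such fiber is either empty or a torsor over $\ker \frak N$ on the appropriate Whitehead group, with emptiness governed by whether $\tau(X,D')$ (resp.\ $\tau(X\cup Q, D)$) lies in the image of $\frak N$.

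Here the codimension hypothesis pays off exactly as in the finiteness step: $(Q,\partial_1 Q)$ is at least $2$-connected, so $X\hookrightarrow X\cup Q$ is a $\pi_1$-isomorphism, and the induced identification $\Wh(\pi_1 X) \cong \Wh(\pi_1(X\cup Q))$ intertwines the orientation-twisted involutions and hence the norm maps $\frak N$. The substantive remaining ingredient is additivity of Whitehead torsion for finite Poincar\'e pairs along the gluing,
$$
\tau(X\cup Q, D)\ =\ j_*\tau(X,D')\ +\ k_*\tau(Q,\partial Q),
$$
together with the vanishing of the second summand since $Q$ is a smooth manifold triad and thus carries a canonical simple structure. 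This equates the two image-of-$\frak N$ obstructions, so the two fibers are empty simultaneously; and the same additivity shows that the $\ker \frak N$-torsor structures are intertwined by gluing, yielding the required bijection.

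I expect the main obstacle to be a careful formulation and verification of Poincar\'e additivity of Whitehead torsion, and the equivariance of gluing with respect to the $\ker \frak N$-action on simple finiteness structures. These are precisely the kinds of results naturally packaged into the simple-Poincar\'e Appendix \S\ref{simple} that already underlies Lemma \ref{I^s}, and no genuinely new geometric input beyond that formalism should be required.
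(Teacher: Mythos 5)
Your proposal is correct and follows essentially the same route as the paper: reduce to $r=1$, use that the fibers of $\cal I^s\to\cal I^f$ are homotopically discrete, identify Whitehead groups (and norm maps) via the $2$-connected inclusion $X\to X\cup Q$, apply Lemma \ref{I^s} to see the fibers are empty simultaneously, and conclude bijectivity from equivariance for the free transitive $\ker\frak N$-action. The only difference is that you spell out the correspondence of duality torsions via an additivity-of-torsion formula with the $Q$-term vanishing because $Q$ is a manifold, where the paper compresses this into the remark that it is "clear by naturality."
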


\begin{proof}Again it suffices to consider the $r{=}1$ case. In the notation of the previous section, we must show that for every vertex $X$ of $\cal I^f(D)$ the canonical map of homotopy fibers
\begin{equation}\label{Q} 
\fib_X(\cal I^s(D')\to \cal I^f(D'))\to \fib_{X\cup Q}(\cal I^s(D)\to \cal I^f(D))
\end{equation}
is a weak equivalence, as long as $Q$ has handle dimension at most $n{-}3$. \\

Recall that these homotopy fibers are homotopically discrete spaces, so that we have only to show that the gluing-in-$Q$ map \eqref{Q}
induces a bijection of component sets.
The $2$-connected map $X\to X\cup Q$ induces an isomorphism of Whitehead groups, and it is clear by naturality that the torsion of the duality map of $X\cup Q$ corresponds to that of $X$. In particular the latter is in the image of $\frak N$ if and only if the former is. Thus by Lemma \ref{I^s} the one fiber is nonempty if and only if the other is nonempty. Furthermore, when this holds then the canonical map from the component set of the one to the component set of the other commutes with the (free, transitive) action of the kernel of $\frak N$ and so must be a bijection.
\end{proof}

\section{The Surgery Step \label{surgery-step}}

\begin{lem}\label{surgerylemma}
If $D$ and $Q_1,\dots ,Q_r$ are as in \S\ref{beginning}, with $r\ge 2$, $n{-}q_1\ge 3$, and $n\ge 5$, then the map $\cal I^b(D_\bullet)\to \cal I^s(D_\bullet)$, regarded as an $(r{+}1)$-cube, is $\infty$-cartesian. 
\end{lem}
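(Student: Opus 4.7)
The plan is to follow the template of \S\ref{PD-step} and \S\ref{simplestep}. By iteratively using the fact that a cube is $\infty$-cartesian whenever it can be written as a map between two $\infty$-cartesian cubes, we reduce to the case $r=1$: a single triad $Q$ with $n{-}q_1\ge 3$ and $\partial_0 Q \subset D$, with $D'$ obtained from $D$ by replacing $\partial_0 Q$ with $\partial_1 Q$. We then must show that for every $X = Y\cup_{\partial_1 Q} Q$ with $Y\in \cal I^s(D')$, the induced map of homotopy fibers
\[
\fib_Y\bigl(\cal I^b(D')\to \cal I^s(D')\bigr) \longrightarrow \fib_X\bigl(\cal I^b(D)\to \cal I^s(D)\bigr)
\]
is a weak equivalence.

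Identify these homotopy fibers with block structure spaces $\widetilde{\cal S}^b(Y,D')$ and $\widetilde{\cal S}^b(X,D)$, whose simplices are simple homotopy equivalences from smooth compact manifolds (with the prescribed boundary) to $Y$ or $X$, up to block diffeomorphism. Under the correspondence $(N,\phi)\mapsto (N\cup_{\partial_1 Q} Q,\, \phi \cup \mathrm{id}_Q)$, the gluing-in-$Q$ map identifies $\widetilde{\cal S}^b(Y,D')$ with the relative structure space $\widetilde{\cal S}^b(X,D;\mathrm{rel}\,Q)$ consisting of those structures on $X$ that fix $Q\subset X$ as a smooth submanifold. It therefore suffices to prove that the forgetful map
\[
\widetilde{\cal S}^b(X,D;\mathrm{rel}\,Q) \longrightarrow \widetilde{\cal S}^b(X,D)
\]
is a weak equivalence. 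Concretely, this amounts to showing that for every block manifold structure $\psi\colon M\to X$, the space of compatible smooth block embeddings $Q\hookrightarrow M$ lifting the Poincar\'e embedding $Q\hookrightarrow X$ (and agreeing with the standard embedding on $\partial_0 Q\subset D$) is contractible. This is a parametrized codimension-three embedding uniqueness statement, a block-theoretic version of the Haefliger-Zeeman theorem. Under the hypotheses $n-q_1\ge 3$ and $n\ge 5$ it follows from the surgery-theoretic machinery assembled in \cite[th.\ 3.4.1]{GKW}, whose statement is essentially designed for this application.

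The main obstacle is verifying the parametrized form of the codim-three block embedding uniqueness: in a single simplicial degree this is classical, but to obtain a weak equivalence of simplicial sets (rather than merely a $\pi_0$-bijection) one needs a contractibility statement in families, which is proved by induction on the handles of $Q$ using the block surgery exact sequence at each stage. The assumption $n\ge 5$ is what allows the requisite parametrized Whitney-trick arguments below the middle dimension. Once this is in place, iterating the reduction to $r=1$ yields the $\infty$-cartesian conclusion for the full $(r{+}1)$-cube.
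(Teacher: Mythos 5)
Your argument breaks down at the very first step, the reduction to $r=1$. The device used in \S\ref{PD-step} and \S\ref{simplestep} --- writing the $(r{+}1)$-cube as a map of lower cubes and iterating --- only works if the statement one bottoms out at, the $r=1$ square, is itself true; in those sections it is, because the relevant homotopy fibers are empty-or-contractible (resp.\ homotopically discrete torsors). For the surgery step the $r=1$ statement is \emph{false}, which is exactly why the lemma carries the hypothesis $r\ge 2$. Concretely, with $X=Y\cup_{\partial_1Q}Q$, the gluing map of structure spaces $\cal S(Y)\to\cal S(X)$ (your map of fibers) is not an equivalence in general: comparing with normal invariants via the $\pi$-$\pi$ square (Proposition \ref{pi-pi}) identifies its homotopy fiber with the homotopy fiber of $\cal N(Y)\to\cal N(X)$, i.e.\ with a space of lifts/maps controlled by $F_*(Q/\partial Q,\,G/O)$, which is nontrivial for general $Q$. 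The codimension-three condition kills the $L$-theoretic discrepancy (that is the content of the $\pi$-$\pi$ argument) but not the normal-invariant discrepancy; the latter only disappears for $r\ge 2$, where one uses that every $2$-dimensional face of the cube $X_\bullet$ is a homotopy pushout, so that the cube $\cal N(X_\bullet)$ of lift spaces from $BG$ to $BO$ is $\infty$-cartesian. The paper's proof is organized around precisely this split: the relative $(r{+}1)$-cube $\cal S(X_\bullet)\to\cal N(X_\bullet)$ is $\infty$-cartesian (this part does reduce to $r=1$ and uses Wall's $\pi$-$\pi$ theorem, which is where $n\ge 5$ and the $2$-connectivity coming from handle codimension $\ge 3$ enter), and then $\cal N(X_\bullet)$ is $\infty$-cartesian because $r\ge 2$.

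Your second step is also not correct as stated. The homotopy fiber of $\cal S(Y)\to\cal S(X)$ over a structure $(M,f)$ is not the space of block embeddings of $Q$ in $M$ lifting the Poincar\'e embedding ``up to contractibility'': the comparison map from block embeddings to (simple) Poincar\'e embeddings of a codimension-zero piece $Q$ with handle codimension $\ge 3$ has as fiber something equivalent to a structure space of $Q$ rel $\partial_0Q$ (equivalently, a normal-invariant term of the sort above), which is not contractible in general; what Browder--Casson--Sullivan--Wall-type results give is that this fiber is \emph{independent of the ambient manifold}, a relative statement that again only helps when there is at least one more cube direction to play against. So the ``parametrized Haefliger--Zeeman'' contractibility you invoke is not available, and no induction over handles of $Q$ will produce it. To repair the argument you would need to introduce the normal invariant $\cal S(X_\bullet)\to\cal N(X_\bullet)$ (or an equivalent surgery-theoretic comparison) and exploit $r\ge2$ as the paper does.
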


The proof will be a standard application of unobstructed surgery theory. 

\subsection{Sketch of the argument\label{surgery-sketch}}
The plan is to show that for every choice of basepoints in $\cal I^s(D_\bullet)$ the $r$-cube made up of the homotopy fibers of the maps $\cal I^b(D_S)\to \cal I^s(D_S)$ is $\infty$-cartesian. \\

For an object $X$ of $\cal I^s(D)$, {i.e.} a simple Poincar\'e complex whose boundary is the manifold $D$, we will denote the homotopy fiber of the map $\cal I^b(D) \to \cal I^s(D)$ by $\cal S(X)$ and interpret it as a space of manifold structures (relative to $D$) on the pair $(X,D)$. Thus in our situation if $X_\bullet$ is a family of compatible basepoints for $\cal I^s(D_\bullet)$ then the cube of homotopy fibers mentioned above can be written $\cal S(X_\bullet)$. \\

The space $\cal S(X)$ of manifold structures (or solved surgery problems) on $(X,D)$ has a canonical map, the normal invariant, to the space $\cal N(X)$ of normal structures (or surgery problems). 
In our situation the normal invariant gives a map of $r$-cubes $\cal S(X_\bullet)\to \cal N(X_\bullet)$. \\

Wall's $\pi$-$\pi$ Theorem basically says that the obstruction to solving a surgery problem is unchanged when the fundamental group of $X$ is unchanged. Using it we will see that when $X$ and $X\cup Q$ differ by handles of co-index at least three then the square
$$
\xymatrix{
\cal S(X) \ar[r] \ar[d] & \cal S(X\cup Q)\ar[d] \\
\cal N(X) \ar[r] & \cal N(X \cup Q)
}
$$
is homotopy cartesian.
It follows that in our situation the $(r{+}1)$-cube $\cal S(X_\bullet)\to \cal N(X_\bullet)$ is $\infty$-cartesian if $r$ is at least one. Thus in order for the cube $\cal S(X_\bullet)$ to be $\infty$-cartesian it suffices if the cube $\cal N(X_\bullet)$ is $\infty$-cartesian, which it rather obviously is as long as $r$ is at least two.
Here are some details.

\subsection{The space of manifold structures} 
Suppose that $D$ is a closed smooth $(n{-}1)$-manifold.
For an object  $X\in \cal I^s(D)$, let $\cal S(X)$ be the homotopy
fiber at $X$ of $\cal I^b(D) \to \cal I^s(D)$.
Using Waldhausen's Theorem B' (recalled in \S\ref{WaldAB} below) one may describe $\cal S(X)$ as the nerve of the following simplicial category: In simplicial degree $k$ its 
objects are given by pairs $(M,f)$ in which $M$ is a compact smooth manifold whose boundary is 
$D$ and $f\: M \times \Delta^k \to X \times \Delta^k$ is a block simple homotopy equivalence restricting to the identity on $D\times \Delta^k$. A morphism
$(M,f) \to (M',f')$ is a block diffeomorphism $h\: M \times \Delta^k \to M'\times \Delta^k$
such that $f'\circ h = f$.

\begin{rem} Weiss and Williams \cite[p.\ 168]{Weiss-Williams_auto} define the structure space of $X$
as the realization of the simplicial category which in simplicial degree $k$ has objects
$(M,f)$ where $M$ is a compact smooth manifold and $f\: M \times \Delta^k \to X \times \Delta^k$
is a block simple homotopy equivalence that restricts to a block homeomorphism $\partial M \times \Delta^k \to D \times \Delta^k$. A morphism $(M,f) \to (M',f')$ is a
block diffeomorphism $h\: M \times \Delta^k \to M' \times \Delta^k$ which 
commutes with the reference maps to $X \times \Delta^k$. Our structure space is therefore
a subspace of the Weiss-Williams one. However, a straightforward application of 
Waldhausen's Theorem A' shows that the inclusion is a weak equivalence. 
\end{rem}

\subsection{The space of normal structures} Let $X$ be a Poincar\'e space whose boundary $D$
is a compact smooth $(n{-}1)$-manifold. Let $\eta$ be the Spivak normal spherical fibration of $X$. Its restriction to $D$ is
canonically equivalent to the underlying stable spherical fibration of the stable normal bundle of $D$.  

A {\it normal structure} on $X$ is
a stable vector bundle $\xi$ on $X$ restricting to the stable normal bundle of $D$, together with an equivalence of stable spherical fibrations between $\eta$ and the underlying spherical fibration of $\xi$. The equivalence is required to restrict to the given equivalence on $D$. 

The space $\cal N(X)$ of normal structures is defined by letting a $k$-simplex be a stable vector bundle $\xi$ on $X\times \Delta^k$ restricting to the stable normal bundle of $D\times \Delta^k$, together with an equivalence of stable spherical fibrations over $X\times \Delta^k$ restricting to the given equivalence on $D\times \Delta^k$. This is equivalent to the singular complex of a homotopy fiber of
$$
\text{\rm map}(X,BO) \to \text{\rm map}(X,BG) \times_{ \text{\rm map}(D,BG)} \text{\rm map}(D,BO),
$$
where $BO$ and $BG$ classify stable vector bundles and stable spherical fibrations respectively.  

\begin{rem}A normal structure determines a surgery problem, well-defined up to cobordism. That is, there is always a smooth compact manifold $M$ with boundary $D$ and a map $f:M\to X$ covered by a stable vector bundle isomorphism between $f^*\eta$ and the normal bundle, with both $f$ and the bundle isomorphism being the identity on $D$ and with $f$ carrying the fundamental class of $(M,D)$ to that of $(X,D)$. In fact, one could define a simplicial set that is equivalent to $\cal N(X)$ by taking surgery problems as the $0$-simplices, cobordisms between surgery problems as the $1$-simplices, and so on.\end{rem}

\subsection{The normal invariant} 
The {\it normal invariant} is  a (weak) map $\cal S(X) \to \cal N(X)$. To define it, we introduce a simplicial category $\cal S'(X)$ equivalent to $\cal S(X)$ and give a map $\cal S'(X)\to \cal N(X)$. 
An object of $\cal S'(X)$ consists of an object $(M,f)$ of $\cal S(X)$, a stable vector bundle $\phi$ on $X$, and an identification between $f^*(\phi) \in \cal N(M)$ and the stable normal bundle of $M$.
A morphism $(M,f,\phi) \to (M',f',\phi')$ consists of a $\cal S(X)$-morphism $h\:(M,f) \to (M',f')$
together with a compatible bundle isomorphism $\phi' \to \phi$.  The forgetful map $\cal S'(X) \to \cal S(X)$ is a weak equivalence by Waldhausen's Theorem A' (cf.\ \ref{A'}). 
The forgetful map $(M,f,\phi) \mapsto \phi$ gives a functor 
$$
\cal S'(X)\to \cal N(X)\, .
$$
The weak map $\cal S(X) \overset{\sim}\leftarrow \cal S'(X) \to \cal N(X)$ is the normal invariant.

\begin{rem} Let $F(X,G/O)$ be the function space of maps $X\to G/O$ which take $D$ to the basepoint. It is an $H$-space, and it acts on $\cal N(X)$ making it into a $F(X,G/O)$-torsor.
In particular, a choice of basepoint in $\cal N(X)$
determines a homotopy equivalence $\cal N(X) \simeq
F(X,G/O)$.  
\end{rem}

\begin{proof}[Proof of Lemma \ref{surgerylemma}]
Fix any vertex $X$ of $\cal I^s(D_{\underline r})$ and thus a collection $X_\bullet$ of compatible base points for $\cal I^s(D_\bullet)$. The homotopy fibers of $\cal I^b(D_S)\to \cal I^s(D_S)$ form an $r$-cube $\cal S(X_\bullet)$. The problem is to show that it is $\infty$-cartesian.\\

We first prove that the $(r{+}1)$-cube $\cal S(X_\bullet) \to \cal N(X_\bullet)$ is
$\infty$-cartesian. Using the same principle as in \S\ref{PD-step}, we can reduce to the $r{=}1$ case. Let $D$, $Q$, and $D'$ be as in \S\ref{PD-step}.
For a simple Poincar\'e pair $(X,D')$ we have the square
$$
\xymatrix{
\cal S(C) \ar[r] \ar[d] & \cal S(C\cup Q)\ar[d] \\
\cal N(C) \ar[r] & \cal N(C \cup Q)
}.
$$
\begin{prop}\label{pi-pi}
If $n\ge 5$ and the handle dimension of $Q$ is at most $n{-}3$, then the square above is $\infty$-cartesian. 
\end{prop}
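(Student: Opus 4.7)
The plan is to exhibit each column of the square as a fiber sequence whose third term depends only on fundamental-group data, and then to observe that $C\hookrightarrow C\cup Q$ is $2$-connected so that the third terms are identified. In the block category, and under the hypothesis $n\ge 5$, Quinn's refinement of Wall surgery theory promotes the surgery exact sequence to a fibration of spaces
$$
\cal S(Y)\to \cal N(Y)\to \cal L(Y),
$$
with $\cal L(Y)$ the relevant space in Wall's simple surgery $L$-spectrum $\mathbb L^s(\mathbb Z[\pi_1 Y])$ twisted by the orientation character $w_1(Y)$, and the second map being the surgery obstruction. The essential feature -- which is the content of Wall's $\pi$-$\pi$ theorem in this packaging -- is that $\cal L$ is functorial in the pair $(\pi_1(Y),w_1(Y))$ alone, independent of the higher homotopy type of $Y$.

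The handle dimension hypothesis on $Q$ implies, by turning a handle decomposition on its head, that the pair $(Q,\partial_1 Q)$ is $2$-connected, so the inclusion $C\hookrightarrow C\cup Q$ is $2$-connected as well. In particular it induces an isomorphism on $\pi_1$ and matches orientation characters, so the induced map $\cal L(C)\to \cal L(C\cup Q)$ is a weak equivalence.

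Now map the original square into the corresponding square of $\cal L$-spaces to obtain a $3$-cube whose columns are the fiber sequences above. The bottom face is a square of weak equivalences, hence $\infty$-cartesian. Since the columns are fibrations, the top face -- the original square -- is also $\infty$-cartesian, which is what we wanted.

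The principal technical hurdle is the setup step: constructing the surgery fibration in the block, simple-homotopy setting with enough strictness that functoriality of $\cal L(Y)$ in $(\pi_1(Y),w_1(Y))$ is visible at the level of spaces (and not merely on homotopy groups). This is well understood, but must be handled with care; the Weiss-Williams picture of the block structure space, already cited in the excerpt, provides the right framework.
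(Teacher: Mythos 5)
Your proposal is correct in outline, but it takes a genuinely different route from the paper. You package the $\pi$-$\pi$ phenomenon into the space-level surgery exact sequence: the block structure space is (for $n\ge 5$) the fiber over $0$ of a surgery-obstruction map $\cal N(Y)\to\cal L(Y)$, with $\cal L$ depending only on $(\pi_1,w_1)$, and the proposition then reduces to the observation that $C\to C\cup Q$ is $2$-connected, so that $\cal L(C)\to\cal L(C\cup Q)$ is an equivalence. The paper never invokes the surgery fibration or $L$-spectra at all: it applies Wall's $\pi$-$\pi$ theorem directly, twice. A point of the homotopy limit of $\cal S(C\cup Q)\to\cal N(C\cup Q)\leftarrow\cal N(C)$ is converted into a surgery problem over $(C\cup Q)\times I$ that is already solved on all of the boundary except $C\times 1$; since $C\times 1\subset (C\cup Q)\times I$ is $2$-connected and $n{+}1\ge 6$, the $\pi$-$\pi$ theorem supplies a solution, which gives $0$-cartesianness, and the same theorem applied to problems over $D^m\times(C\cup Q)\times I$ shows the homotopy fibers are weakly contractible. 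What your route buys is conceptual economy: the group-dependence of $L$-theory does all the work, and the argument would generalize to any map inducing an isomorphism on $(\pi_1,w_1)$. What it costs is exactly the hurdle you flag, and it is not small: you must construct the surgery fibration in the block, simple, rel-boundary setting, identify its fiber over $0$ with the structure space of the present paper (the fiber of $\cal I^b(D)\to\cal I^s(D)$), and make the obstruction map natural enough to commute coherently with the gluing-in-$Q$ maps -- the needed compatibility being additivity of the surgery obstruction when a solved piece is glued on. Moreover, the identification of the fiber with $\cal S(Y)$ is itself proved by the same realization/$\pi$-$\pi$ arguments that the paper uses directly, so the paper's proof is the more elementary and self-contained of the two. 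You were right to insist on the simple ($w_1$-twisted) $L$-theory and on $n\ge 5$; those are precisely where the hypotheses enter in either approach.
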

\begin{proof}
We first show that it
is $0$-cartesian, that is, that any point in the homotopy limit of 
$$\cal S(C\cup Q) \to \cal N(C\cup Q) 
\leftarrow \cal N(C) $$
can be deformed into $\cal S(C)$.  We can assume that the point
is described by a manifold structure on $C\cup Q$, 
a normal structure $\phi$ on $C$, and a $1$-simplex in $\cal N(C \cup Q)$ which connects them.   
The $1$-simplex gives rise to a surgery problem over $(C\cup Q) \times I$ that is already solved on all of the boundary except $C\times 1$, i.e., on
$(C\cup Q) \times 0 \cup \partial (C \cup Q) \times I \cup Q \times 1$. What we need is to extend the solution to all of $(C\cup Q)\times I$.  The inclusion $C \times 1 \subset (C\cup Q) \times I$ is $2$-connected because of the condition $n{-}q\le 3$, and the dimension $n{+}1$ is at least six, so such an extension exists by the $\pi$-$\pi$ theorem (\cite[th.\ 3.3]{Wall}).
Consequently, the square is
$0$-cartesian.\\

Now let $F$ be any homotopy
fiber of the map 
$$
\cal S(C) \to \holim (\cal S(C\cup Q) \to \cal N(C\cup Q) 
\leftarrow \cal N©).
$$
We have just shown that $F$ is nonempty. To see that it is weakly contractible, we must show that for $m\ge 1$ any map $S^{m-1} \to F$ can be extended to $D^m$.
This means having to solve a surgery problem over $D^m \times (C\cup Q) \times I$ relative to 
$(S^{m-1} \times (C\cup Q) \times I) \cup (D^m \times C \times 1)$. Again by the $\pi$-$\pi$ Theorem this can be done.
\end{proof}

It remains to see that the $r$-cube $\cal N(X_\bullet)$ is $\infty$-cartesian. In the $r$-cube $X_\bullet$ 
every two-dimensional face is a homotopy pushout. It follows (interpreting $\cal N(-)$ as a space of lifts from $BG$ to $BO$) that each two-dimensional face of  $\cal N(X_\bullet)$ is a homotopy pullback. In particular $\cal N(X_\bullet)$ is $\infty$-cartesian.  

\end{proof}

\subsection{The $4$-dimensional case}

The hypothesis $n\ge 5$ was needed in the first application of the $\pi$-$\pi$ theorem in the proof of \ref{pi-pi}. Thus when $n=4$ we can no longer say that the map
$$
\cal S(C) \to \holim (\cal S(C\cup Q) \to \cal N(C\cup Q) 
\leftarrow \cal N(C))
$$
is surjective on $\pi_0$. We can still say that it is injective on $\pi_0$ and bijective on homotopy groups, because the second application of the $\pi$-$\pi$ theorem required only $m{+}n\ge 5$ for $m\ge 1$. To record and exploit this information, we introduce some language for discussing connectivity of cubes when $\pi_0$-surjectivity may be lacking.

Recall that a space $X$ is called $k$-connected if for every $m$ with $-1\le m\le k$ every (continuous) map $S^m\to X$ can be extended to $D^{m+1}$, and that a map $X\to Y$ is $k$-connected if for every point in $Y$ the homotopy fiber of the map is a $(k-1)$-connected space, and that a cubical diagram $X_\bullet$ is $k$-cartesian if the associated map $X_\emptyset\to \holim_{S\neq \emptyset}X_S$ is $k$-connected.

\begin{defn}\label{almost}A space is {\it almost $k$-connected} if it is either empty or $k$-connected. A map of spaces is {\it almost $k$-connected} if each of its homotopy fibers is almost $(k-1)$-connected. A cube $X_\bullet$ of spaces is {\it almost $k$-cartesian} if the associated map $X_\emptyset\to \holim_{S\neq\emptyset}X_S$ is almost $k$-connected.
\end{defn}

Thus a map is $k$-connected if it is both $0$-connected and almost $k$-connected, and a cube is $k$-cartesian if it is both $0$-cartesian and almost $k$-cartesian.

Note that if $k\ge 1$ then an almost $k$-connected map can also be described as a map that induces a surjection of $\pi_k$ and an isomorphism of $\pi_m$ for $0<m<k$, for all basepoints in the domain, and an injection (but not necessarily a surjection) of $\pi_0$.

\begin{add}\label{4}
With the same hypotheses as Lemma \ref{surgerylemma} except that $n=4$, the cube $\cal I^b(D_\bullet)\to \cal I^s(D_\bullet)$ is almost $\infty$-cartesian. \\
\end{add}
 
\begin{proof}The steps are just as in the proof of Lemma \ref{surgerylemma}. We know that Proposition \ref{pi-pi} is valid in the $n{=}4$ case with a weakened conclusion: the square is almost $\infty$-cartesian. To deduce the correspondingly weakened version of \ref{surgerylemma}, we need the general statements appearing below in Lemma \ref{lem:helper}. The proofs, which are straightforward modifications of proofs of the corresponding statements without the \lq almost\rq , are left to the reader.
\end{proof}

\begin{lem} \label{lem:helper}
Let $X_\bullet\to Y_\bullet$ be a map of $r$-cubes, viewed as an $(r+1)$-cube.
Then
\begin{enumerate}
\item $X_\bullet\to Y_\bullet$ is almost $k$-cartesian if for every choice of compatible basepoints in $Y_\bullet$ the $r$-cube $\fib(X_\bullet\to Y_\bullet)$ is almost $k$-cartesian.

\item 
$X_\bullet$ is almost $k$-cartesian if $X_\bullet\to Y_\bullet$ and $Y_\bullet$ are almost $k$-cartesian.

\item 
For composable maps of spaces, if $f\circ g$ is almost $k$-connected and $g$ is almost $(k+1)$-connected then $f$ is almost $k$-connected.

\item
$X_\bullet\to Y_\bullet$ is almost $k$-cartesian if $X_\bullet$ is almost $k$-cartesian and $Y_\bullet$ is almost $(k+1)$-cartesian.
\end{enumerate}
\end{lem}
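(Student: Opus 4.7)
My plan is to deduce each of the four parts by reducing to the standard (non-``almost'') versions through a case split on emptiness. The main structural tool will be, for a map $F\colon X_\bullet\to Y_\bullet$ of $r$-cubes viewed as an $(r{+}1)$-cube and any compatible basepoint system in $Y_\bullet$, a natural identification between the total homotopy fiber of $F$ as an $(r{+}1)$-cube and the total homotopy fiber of the $r$-cube $\fib F$, together with a fibration sequence of total homotopy fibers
$$
\text{tofib}(F)\to \text{tofib}(X_\bullet)\to \text{tofib}(Y_\bullet)
$$
at each compatible lift of basepoints to $X_\bullet$.

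For part (1), the equivalence above will convert the condition \emph{``for every compatible basepoint choice in $Y_\bullet$, $\fib F$ is almost $k$-cartesian as an $r$-cube''} directly into \emph{``every total homotopy fiber of $F$ is empty or $(k{-}1)$-connected''}, which is exactly what almost $k$-cartesian as an $(r{+}1)$-cube means. So this part will essentially be a matter of unwinding definitions once the iterated-fiber identity is in hand.

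For parts (2) and (4), I will plug the connectivity hypotheses into the displayed fibration sequence and run a short emptiness case split. In part (2), if $\text{tofib}(X_\bullet)$ is empty at some basepoint there is nothing to check; otherwise pick a basepoint in it, use its image to see that $\text{tofib}(Y_\bullet)$ is nonempty (hence $(k{-}1)$-connected by hypothesis), use the chosen basepoint itself to see that $\text{tofib}(F)$ is nonempty (hence $(k{-}1)$-connected by hypothesis), and then apply the long exact sequence of the fibration to conclude that $\text{tofib}(X_\bullet)$ is $(k{-}1)$-connected. Part (4) runs symmetrically: the hypotheses $\text{tofib}(X_\bullet)$ almost $(k{-}1)$-connected and $\text{tofib}(Y_\bullet)$ almost $k$-connected force the fiber $\text{tofib}(F)$ to be almost $(k{-}1)$-connected by an analogous LES deduction.

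Part (3) is the map-level analog of the same circle of ideas. Given $z$ in the target of $f$ with $\fib_z(f)$ nonempty, I would pick $y\in Y$ mapping to $z$, lift $y$ along $g$ to a point $x\in X$, and apply the Puppe fibration sequence $\fib_y(g)\to\fib_z(f\circ g)\to\fib_z(f)$ based at $x$. The hypotheses give $\fib_y(g)$ $k$-connected and $\fib_z(f\circ g)$ $(k{-}1)$-connected (both nonempty at $x$), and the LES then delivers the $(k{-}1)$-connectivity of $\fib_z(f)$. The main obstacle I expect here is the $\pi_0$ bookkeeping: one needs to verify that a nonempty fiber of $f$ contains a representative that lifts along $g$, so that the Puppe sequence can actually be activated; this is where the precise strength of ``almost $(k{+}1)$-connected'' for $g$ is used, and it is the only place in the whole lemma where the adaptation is genuinely more than cosmetic. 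Once that step is discharged, the remaining analysis is the standard long-exact-sequence chase.
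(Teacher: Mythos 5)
Parts (1), (2) and (4) of your plan are sound, and since the paper's own ``proof'' of this lemma is just the remark that the arguments are routine modifications of the non-almost ones, your route --- identify the total homotopy fiber of the $(r{+}1)$-cube with that of the fiber $r$-cube, use the fibration sequence of total fibers, and run an emptiness case split before invoking the long exact sequence --- is essentially the intended one. One point worth making explicit: for \emph{almost} cartesianness it suffices to inspect total fibers at basepoint systems induced from points of $X_\emptyset$, because any nonempty homotopy fiber of $X_\emptyset\to\holim_{S\neq\emptyset}X_S$ is equivalent to the fiber over an image point, while fibers over components not hit are empty and hence harmless. This observation is what legitimizes your case splits in (2) and (4), and it also repairs a small basepoint mismatch in your (2): a point of the total fiber of $X_\bullet$ sits over its own image in the total fiber of $Y_\bullet$, not over a previously chosen basepoint, so one should simply work throughout with the canonical basepoints coming from a point of $X_\emptyset$.

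For part (3) there is a genuine gap: the obstacle you flag cannot be discharged. With the standard convention that in $f\circ g$ the map $g\colon X\to Y$ is applied first and $f\colon Y\to Z$ second, the statement you are trying to prove is actually false. Take $X=\ast$, $Y=S^0$, $Z=\ast$, with $g$ the inclusion of one point and $f$ the constant map, and $k=1$: every homotopy fiber of $g$ and of $f\circ g$ is empty or contractible, so both are almost $m$-connected for every $m$, yet the homotopy fiber of $f$ is $S^0$, nonempty but not path connected, so $f$ is not almost $1$-connected. The failure is exactly at the step you single out: almost-connectivity of $g$ carries no $\pi_0$-surjectivity, so a component of $\fib_z(f)$ all of whose points have empty $g$-fiber is invisible to the Puppe sequence, and no bookkeeping can reach it. The true statement --- and the one actually needed downstream, e.g.\ it is what your own argument for (4) uses, where the projection obtained by base change from $Y_\emptyset\to\holim_{S\neq\emptyset}Y_S$ plays the role of the \emph{second} map --- puts the auxiliary hypothesis on $f$: if $f\circ g$ is almost $k$-connected and $f$ is almost $(k{+}1)$-connected, then $g$ is almost $k$-connected. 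For that version your LES chase works verbatim, with the emptiness split applied to $\fib_y(g)$: if it is empty there is nothing to prove; if not, then $\fib_{f(y)}(f\circ g)$ is nonempty, hence $(k{-}1)$-connected, and $\fib_{f(y)}(f)$ contains $y$, hence is $k$-connected, and the fibration sequence $\fib_y(g)\to\fib_{f(y)}(f\circ g)\to\fib_{f(y)}(f)$, together with the usual $\pi_1$-action argument at $\pi_0$, gives that $\fib_y(g)$ is $(k{-}1)$-connected. So you should correct the direction of the composite (or, equivalently, swap the roles of $f$ and $g$) rather than try to prove (3) as literally written.
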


\section{The Concordance Step \label{pseudoisotopy-step}}

\begin{lem}\label{conclemma}
If $D$ and $Q_1,\dots ,Q_r$ are as in \S\ref{beginning}, with $r\ge 1$ and $n-q_i\ge 3$ for all $i$, then the map $\cal I(D_\bullet)\to \cal I^b(D_\bullet)$ is $\Sigma$-cartesian, where $\Sigma=(n{-}q_1{-}2){+}\dots {+}(n{-}q_r{-}2)$. 
\end{lem}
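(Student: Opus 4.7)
The plan is to pass from the interiors picture to the embedding-space picture (using the equivalence between Theorem \ref{int} and Theorem \ref{symm}), and then identify the cube of homotopy fibers of the comparison map with a cube of concordance embedding spaces, to which the main theorem of \cite{thesis} applies.

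First I would use the fact that for any basepoint $N\in \cal I(D_\emptyset)$ the homotopy fiber at $N$ of the gluing functor $\cal I(D_S)\to \cal I(D_\emptyset)$ is $E(Q_S,N)$. Under the codimension-three hypothesis the analogous statement holds in the block setting, the fibration sequence $E^b(P,N)\to \cal I^b\partial(N-P)\to \cal I^b(\partial N)$ having been justified in \S\ref{embeddings} via Hudson's theorem. Applying this at each vertex of the cube of comparison maps, the $(r{+}1)$-cube $\cal I(D_\bullet)\to \cal I^b(D_\bullet)$ is $\Sigma$-cartesian if and only if, for every such $N$, the $(r{+}1)$-cube
\[
E(Q_\bullet,N)\to E^b(Q_\bullet,N)
\]
is $\Sigma$-cartesian.

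Next I would take vertical homotopy fibers of this second $(r{+}1)$-cube at a compatible system of block embeddings of the $Q_i$ into $N$. The classical concordance-theoretic interpretation of the difference between smooth and block isotopy (Hudson, Morlet, Burghelea--Lashof--Rothenberg) identifies the homotopy fiber of $E(Q_S,N)\to E^b(Q_S,N)$ at a block embedding with a smooth concordance embedding space $CE(Q_S,N)$ in the sense of \cite{thesis}, naturally in $S\subset \underline r$. Thus the resulting $r$-cube of fibers is canonically equivalent to the concordance-embedding cube $CE(Q_\bullet,N)$ associated to the disjoint family $Q_1,\dots,Q_r$ in $N$.

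Finally I would invoke the main theorem of \cite{thesis} (compare \cite[\S 3.5]{GKW}): when $n{-}q_i\ge 3$ for every $i$, the $r$-cube $CE(Q_\bullet,N)$ is $\Sigma$-cartesian, where $\Sigma=\sum_{i=1}^r (n{-}q_i{-}2)$. Combined with the previous reductions this yields the lemma. The main obstacle I foresee lies in the middle step: one must make the identification of the fiber with $CE(Q_S,N)$ natural in $S$, so that the \lq\lq forget a component\rq\rq\ maps (for $T\subset S$) on embeddings match the corresponding restriction maps on concordance embeddings. Once that functorial identification is in place, the theorem of \cite{thesis} delivers the required connectivity and no further input is needed.
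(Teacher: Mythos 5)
There is a genuine gap, and it is exactly at the step you flag as the main obstacle, but the problem is more serious than naturality in $S$: the homotopy fiber of $E(Q_S,N)\to E^b(Q_S,N)$ is \emph{not} a concordance embedding space. The correct relationship is the fibration sequence
$$
E^{\rel}(P\times I,\,N\times I)\to \CE^{\rel}(P,N)\to E^{\rel}(P,N),
$$
where $E^{\rel}$ and $\CE^{\rel}$ denote the homotopy fibers of the smooth-to-block comparison maps. In handle codimension $\ge 3$ the block concordance space $\CE^b(P,N)$ is contractible, so $\CE^{\rel}(P,N)\simeq \CE(P,N)$; but the fiber $E^{\rel}(P\times I,N\times I)$ of the map $\CE(P,N)\to E^{\rel}(P,N)$ is an object of the same kind one dimension up and is not contractible. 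So $\CE(P,N)$ only approximates $\hofib(E\to E^b)$ in a range (roughly $n{-}p{-}2$), which is strictly smaller than the $\Sigma$-cartesianness you need once $r\ge 1$. This is the classical Hatcher/Burghelea--Lashof--Rothenberg phenomenon: the block-versus-smooth difference is built from concordance embedding spaces of $P\times I^j$ for all $j$, not identified with the $j=0$ term. The paper closes this gap by a double induction (on $r$ and on a connectivity level $k$), at each stage applying the statement being proved to $(P\times I, N\times I-(Q\times I)_\bullet)$ and feeding in the disjunction theorem for $\CE(P,N-Q_\bullet)$ from \cite{thesis}; without that bootstrapping your argument cannot reach the claimed connectivity.

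Two further points. First, your initial reduction takes fibers of the comparison maps over block basepoints (or over the terminal vertex $\cal I^b(D_\emptyset)$); those fibers are structure-space-like, not embedding spaces, so the cube of fibers is not $E(Q_\bullet,N)\to E^b(Q_\bullet,N)$ as stated. The reduction that works is the one in the paper: view the $(r{+}1)$-cube as a map of $r$-cubes in the \lq\lq glue in $Q_r$\rq\rq\ direction, take fibers over points of $\cal I(D_{\underline{r-1}})$, and use the block fibration sequence (valid in codimension $\ge 3$ by Hudson) to identify the block fibers; this yields the cube $E(Q_r,N-Q_\bullet)\to E^b(Q_r,N-Q_\bullet)$ with a fixed source. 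Second, the main theorem of \cite{thesis} is stated for a fixed source, i.e.\ for $\CE(P,N-Q_\bullet)$; the source-varying cube $\CE(Q_\bullet,N)$ you invoke is not literally covered by it (the complement of a concordance embedding of $Q_S$ in $N\times I$ need not be a product), so even that citation would require an extra argument rather than being a direct quotation.
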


In proving this it will be convenient to work with equivalent statements about spaces of embeddings rather than spaces of interiors. View the $r$-cube $\cal I(D_\bullet)$ as a map of $(r-1)$-cubes 
$$
\cal I(D_{\bullet\cup \lbrace r\rbrace})\to \cal I(D_{\bullet}),
$$
a gluing-in-$Q_r$ map, where the subscript now runs through subsets of $\lbrace 1,\dots ,r-1\rbrace$. When a basepoint is chosen in $\cal I(D_{\underline{r-1}})$, giving compatible basepoints in all of the spaces $\cal I(D_{\bullet})$, then we can consider the cube of homotopy fibers and describe it as $E(Q_r,N-Q_\bullet)$. Here the basepoint has been interpreted as a manifold $N$ with boundary $D$ together with disjoint embeddings of $Q_1,\dots Q_{r-1}$ in it. Likewise the homotopy fiber of the block analogue
$$
\cal I^b(D_{\bullet\cup \lbrace r\rbrace})\to \cal I^b(D_{\bullet})
$$
can be described as $E^b(Q_r,N-Q_\bullet)$.\\

To obtain the conclusion of the lemma we show that (for every choice as above) the map
$$
E(Q_r,N-Q_\bullet)\to E^b(Q_r,N-Q_\bullet)
$$ 
is a $\Sigma$-cartesian $r$-cube. Renaming $(Q_1,\dots ,Q_{r-1},Q_r)$ as $(Q_1,\dots ,Q_r,P)$ (and renaming $r-1$ as $r$), this becomes:
$$
E(P,N-Q_\bullet)\to E^b(P,N-Q_\bullet).
$$
For a fixed choice of embedding of $P$ in $N$ disjoint from all $Q_i$ write $E^{\rel}(P,N)$ for the homotopy fiber of $E(P,N)\to E^b(P,N)$.
 
\begin{lem}[Restatement of \ref{conclemma}]\label{newconclemma}
 Let $N$ be a smooth compact $n$-manifold. Suppose that $r\ge 0$ and that $P,Q_1,\dots ,Q_r$ are disjoint codimension zero submanifolds with handle dimensions $p,q_1,\dots ,q_r$ all less than or equal to $n{-}3$. Then the $r$-cube 
$$
E^{\rel}(P,N-Q_\bullet)
$$ 
is $(n{-}p{-}2+\Sigma)$-cartesian where as usual $\Sigma=(n{-}q_1{-}2)+\dots {+}(n{-}q_r{-}2)$.
\end{lem}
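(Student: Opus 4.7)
The plan is to reinterpret the relative embedding space as a (loop space of a) concordance embedding space, and then invoke the multirelative disjunction theorem for concordance embeddings that is the main result of the first author's thesis \cite{thesis} (outlined in \cite[\S 3.5]{GKW}). This is the single place in the paper where concordance theory, as opposed to surgery or Poincar\'e-embedding input, is being used.

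\textbf{Step 1: Translate to concordance embeddings.} I would first set up a natural equivalence of $r$-cubes
$$
E^{\rel}(P, N-Q_\bullet) \;\simeq\; \Omega\,\CE(P, N-Q_\bullet),
$$
where $\CE(P,M)$ denotes the concordance embedding space, i.e.\ the homotopy fiber of restriction $E(P\times I,M\times I)\to E(P,M)$. The point is a BLR/Morlet-style observation that, up to one delooping, $\CE(P,M)$ measures precisely the difference between smooth and block embedding spaces: $E^b(P,M)$ is built from the simplicial space $[k]\mapsto E(P\times\Delta^k,M\times\Delta^k)$ (face-preserving families), and the iterated fibers of the projection to the $0$-simplices $E(P,M)$ are built from concordance embedding spaces. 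Applying this with $M=N-Q_S$ and checking naturality under codimension-zero inclusions $N-Q_S\hookrightarrow N-Q_T$ (for $T\subset S$) yields the equivalence of $r$-cubes above.

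\textbf{Step 2: Apply the thesis.} The main theorem of \cite{thesis} asserts exactly the multirelative disjunction for $\CE$ needed here: under the hypothesis that all the handle codimensions $n-p$ and $n-q_i$ are at least three, the $r$-cube $\CE(P,N-Q_\bullet)$ is $(n-p-1+\Sigma)$-cartesian (one better than what we want, to absorb the loss coming from applying $\Omega$ in Step 1). Taking loops and combining with Step 1 gives the $(n-p-2+\Sigma)$-cartesianness of $E^{\rel}(P,N-Q_\bullet)$, which is the desired conclusion. Note that when $r=0$ this recovers the concordance-implies-isotopy estimate in codimension $\ge 3$, strengthening Hudson's $\pi_0$-bijectivity.

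\textbf{Where the difficulty lies.} The hard step is really Step 2, which is outsourced to \cite{thesis}; I would not try to reprove it here. The delicate bookkeeping in Step 1 is making sure that the BLR/Morlet identification is natural enough in the ambient manifold under deletion of the pairwise-disjoint $Q_i$ to give an honest map of $r$-cubes, and that the loop-shift in the connectivity estimate matches the way the thesis result is stated. Once both are aligned, no further geometric input is needed: the reduction from $\cal I(D_\bullet)\to\cal I^b(D_\bullet)$ to $E^{\rel}(P,N-Q_\bullet)$ carried out just before the lemma converts Lemma \ref{conclemma} into Lemma \ref{newconclemma}, and the combined conclusion feeds back, together with the Poincar\'e/finite/simple/block reductions, into Theorem \ref{int} in the $n-q_i\ge 3$ case.
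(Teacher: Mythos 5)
Your Step 1 is where the argument breaks. There is no equivalence $E^{\rel}(P,M)\simeq\Omega\,\CE(P,M)$. What the block/non-block comparison actually yields is the diagram of horizontal fibration sequences
$$
\xymatrix{
E(P{\times}I,M{\times}I) \ar[r]\ar[d] & \CE(P,M) \ar[r]\ar[d] & E(P,M) \ar[d]\\
E^b(P{\times}I,M{\times}I) \ar[r] & \CE^b(P,M) \ar[r] & E^b(P,M),
}
$$
and, since $\CE^b(P,M)$ is contractible in handle codimension $\ge 3$, a fibration sequence of vertical fibers
$$
E^{\rel}(P{\times}I,M{\times}I)\to \CE(P,M)\simeq\CE^{\rel}(P,M)\to E^{\rel}(P,M).
$$
So $\Omega E^{\rel}(P,M)$ is the homotopy fiber of $E^{\rel}(P{\times}I,M{\times}I)\to\CE(P,M)$, not $\Omega\CE(P,M)$: the correction term $E^{\rel}(P{\times}I,M{\times}I)$ is not negligible (by BLR, the block/non-block difference is governed by a whole tower of concordance embedding spaces of $P\times I^k$, and these do not become more highly connected as $k$ grows), so it cannot be collapsed away to give your clean delooping, even in a range, without further argument.

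Your Step 2 compounds this: the main result of \cite{thesis}, as used in the paper (Lemma \ref{concordance-disjunction}), says the cube $\CE(P,N-Q_\bullet)$ is $(n{-}p{-}2{+}\Sigma)$-cartesian --- in particular for $r=0$ that $\CE(P,N)$ is $(n{-}p{-}3)$-connected --- and no version that is \lq\lq one better\rq\rq\ is available; so even granting your Step 1 you would land at $(n{-}p{-}3{+}\Sigma)$, one short of the claim. The paper's actual proof recovers the sharp number by a double induction, on $r$ and on the connectivity level $k$: the key point is that the estimate $n{-}p{-}2{+}\Sigma$ is unchanged when $(P,N,Q_i)$ is replaced by $(P{\times}I,N{\times}I,Q_i{\times}I)$, so the fibration sequence above (applied cube-wise) shows that $\Omega E^{\rel}(P,N-Q_\bullet)$ is $(k{-}1)$-cartesian whenever the $E^{\rel}(P{\times}I,\dots)$ cube is $(k{-}1)$-cartesian and the $\CE$ cube is $k$-cartesian, while $0$-cartesianness is established separately (for $r=0$ via the $\pi_0$-statement, Hudson's concordance-implies-isotopy; for $r>0$ by viewing the $r$-cube as a map of $(r{-}1)$-cubes that are already $1$-cartesian by induction on $r$). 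To repair your proposal you would need to replace the claimed delooping by this inductive bootstrapping; as written the two steps each assert something stronger than what is true or available.
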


We will deduce this from a statement about concordance embedding spaces, namely the main result of \cite{thesis} in the form of Lemma \ref{concordance-disjunction} below.

\subsection{Concordance Embedding Spaces}
Let $P$ be a submanifold of $N$.

\begin{defn} A {\it concordance embedding} is a smooth embedding $P \times I\to N \times I$
that restricts to the inclusion on $P\times 0 \cup \partial P \times I$ and takes $P\times 1$ into $N\times 1$. The {\it concordance embedding space} $\CE(P,N)$ is the simplicial set in which 
a $k$-simplex is a family of such embeddings smoothly parametrized by $\Delta^k$.
\end{defn}

Restriction to $P\times 1$ gives a fibration $\CE(P,N)\to E(P,N)$, and the fiber over the point corresponding to the inclusion $P\to N$ is \mbox{$E(P\times I,N\times I)$.} We can define a block version $\CE^b(P,N)$, giving a diagram
$$
\xymatrix{
E(P{\times}I,N{\times}I) \ar[r]\ar[d] & \CE(P,N) \ar[r]\ar[d] &   E(P,N) \ar[d]   \\
E^b(P{\times}I,N{\times}I) \ar[r] & \CE^b(P,N) \ar[r]& E^b(P,N)\, , 
}
$$
in which the rows are fibration sequences. The space $\CE^b(P,N)$ is contractible as long as the handle codimension is three or more. (This follows easily from the analogous statement with codimension instead of handle codimension, which is proved in \cite[lem.\ 2.1]{BLR}.) Thus the fiber $\CE^{\rel}(P,N)$ of the middle
vertical map is equivalent to $\CE(P,N)$.

If $P$ and $Q_1,\dots ,Q_r$ are disjointly embedded in $N$ then we have an $r$-cube $\CE(P,N-Q_\bullet)$. The following is essentially the main result of \cite{thesis}.

\begin{lem} \label{concordance-disjunction} If the handle dimensions $p$ and $q_i$ are all at most $n-3$, then the $r$-cube $\CE(P,N-Q_\bullet)$ is 
$(n{-}p{-}2{+}\Sigma)$-cartesian. Here $r$ can be any nonnegative integer. 
\end{lem}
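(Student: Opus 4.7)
The plan is a double induction: on the number $r$ of submanifolds, and (inside the inductive step) on handle dimensions, following the pattern already used in this paper for Theorem \ref{E} and Lemma \ref{handlesplit}. First I would reduce to the codimension-zero case by thickening $P$ and the $Q_i$ to tubular neighborhoods (using that the fiber of the restriction $\CE(D(\xi),N{-}Q_S)\to \CE(P,N{-}Q_S)$ is a space of bundle isomorphisms independent of $S$). Then I would reduce to the case in which $P$ and each $Q_i$ is a single handle, using the restriction fibrations $\CE(A\cup H, N{-}Q_\bullet) \to \CE(A,N{-}Q_\bullet)$ and their analogues in the $Q$-variables exactly as in the handle-by-handle argument that appears after the statement of Theorem \ref{E}. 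The connectivity format $(n{-}p{-}2{+}\Sigma)$ is preserved by these reductions, so it suffices to treat $P$ and $Q_i$ as single handles in the codimension-zero setting.

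The base case $r=0$ is vacuous. The technical heart is the single-pair case $r=1$, which asserts that
\[
\CE(P,\,N{-}Q_1)\;\longrightarrow\;\CE(P,\,N)
\]
is $(2n{-}p{-}q_1{-}4)$-connected whenever both handle codimensions are at least three. This is sharper than what simple transversality gives (which yields only $(n{-}p{-}q_1{-}1)$-connectivity) by a factor of roughly $n-3$, and the extra connectivity is precisely the content of the first author's thesis. The proof proceeds via the pseudoisotopy fibration
\[
\CE(P{\times}I,N{\times}I) \;\longrightarrow\; \CE(P,N) \;\longrightarrow\; E(P,N),
\]
exploiting the extra $I$-direction together with Morlet's disjunction lemma and the stability of concordance spaces in codimension three to promote a $1$-parameter general-position argument into an $(n{-}3)$-fold one. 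Two essentially different proofs are outlined in \cite[\S 3.5]{GKW}.

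For the inductive step $r\ge 2$, I would single out $Q_r$ and view the $r$-cube $\CE(P,N{-}Q_\bullet)$ as a map of $(r{-}1)$-cubes, namely $\CE(P,N{-}Q_r{-}Q_\bullet)\to \CE(P,N{-}Q_\bullet)$ with $\bullet$ now running over subsets of $\{1,\dots,r{-}1\}$. Over each basepoint of the target the fiber is controlled by an isotopy-extension fibration whose fiber is, up to a shift, another concordance embedding space for $Q_r$ in a punctured complement, to which the inductive hypothesis applies with parameter $\Sigma_{i<r}(n{-}q_i{-}2)$ and gain $(n{-}q_r{-}2)$ from the single-pair case. The combined cartesianness is then assembled by the standard map-of-cubes principle (compare Lemma \ref{lem:helper}), which exactly accounts for the shift from $\Sigma_{i<r}(n{-}q_i{-}2)$ to $\Sigma$.

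The main obstacle is the sharp $r{=}1$ estimate: transversality alone is inadequate, and one really does need the pseudoisotopy-theoretic input. Once that is in place, the passage to general $r$ is conceptually routine but requires careful bookkeeping, since the $r$-cube and its two constituent $(r{-}1)$-subcubes do not have matching cartesianness a priori; the cube-of-cubes combinatorics has to be arranged so that the single $(n{-}q_r{-}2)$-gain from the isotopy-extension fiber is applied precisely once.
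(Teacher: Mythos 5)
Your reductions (thickening to a tubular neighborhood, then handle-by-handle induction to the case where $P$ and each $Q_i$ is a single handle) are fine and are essentially what the paper does; indeed the paper's entire contribution to Lemma \ref{concordance-disjunction} is exactly this reduction from handle dimension to dimension, after which it simply \emph{quotes} the main theorem of \cite{thesis}. The genuine gap is in your inductive step on $r$. You attribute only the $r{=}1$ case to \cite{thesis} and claim that the passage to general $r$ is ``conceptually routine'' cube bookkeeping via an isotopy-extension fibration. This has the attribution exactly backwards: the $r{=}1$ case is Morlet's classical Disjunction Lemma, and the multirelative case $r\ge 2$ \emph{is} the main theorem of the thesis, proved there by a substantial geometric argument; it does not follow formally from the birelative case. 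Concretely, the homotopy fiber of $\CE(P,N-Q_S-Q_r)\to \CE(P,N-Q_S)$ is not given by isotopy extension as a concordance embedding space of $Q_r$ in the punctured complement; the available fibrations (restriction of $\CE(P\sqcup Q_r,N-Q_S)$ to either factor) only convert one disjunction problem into another, and the comparison of that fiber with $\CE(Q_r,N-Q_S-P)$ \emph{in the stated range} is itself a disjunction statement of the same strength, so your induction is circular. The numbers also fail to close: applying the inductive hypothesis with $Q_r$ playing the role of $P$ and $Q_1,\dots,Q_{r-1}$ as the removed pieces yields only a $\bigl(n-q_r-2+\sum_{i<r}(n-q_i-2)\bigr)$-cartesian cube, i.e.\ $\Sigma$-cartesian, whereas the cube of fibers must be $(n-p-2+\Sigma)$-cartesian; Lemma \ref{lem:helper} provides no mechanism for recovering the missing $n-p-2$. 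A further sanity check: if a formal induction of this type from the $r{=}1$ case were valid, the same scheme applied to $E(P,N-Q_\bullet)$ would derive Theorem \ref{E} from the elementary transversality estimate, which is precisely what cannot be done (compare Remark \ref{vw}, where dimension counting yields only $1-rp+\Sigma$) and is the reason this paper and \cite{thesis} exist.

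Two smaller points. The $r{=}0$ case is not vacuous: a $k$-cartesian $0$-cube is a $(k{-}1)$-connected space, so $r{=}0$ asserts that $\CE(P,N)$ is $(n-p-3)$-connected, a nontrivial statement that the paper uses separately (e.g.\ in the proof of Lemma \ref{newconclemma}). And if your intent was simply to cite \cite{thesis} (or the outlines in \cite[\S 3.5]{GKW}) for the full multirelative statement rather than reprove it, you should say so explicitly and cite it for all $r$, not just $r{=}1$; the only argument then required is the reduction you sketch in your first paragraph, as in \S\ref{ThmA}.
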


(The statement in \cite{thesis} concerns the special case where handle dimension is dimension. We omit the argument for reducing the general case to that case, since it is exactly like the corresponding argument for $E(P,N-Q_\bullet)$ as explained in \ref{ThmA}.)

The case $r=1$ is Morlet's Disjunction Lemma. The case $r=0$ says that the space $\CE(P,N)$ is $(n{-}p{-}3)$-connected, a $k$-cartesian $0$-cube being the same thing as a $(k{-}1)$-connected space. In particular this recovers Hudson's result, that $\CE(P,N)$ is connected if $n{-}p\ge 3$.

\begin{proof}[Proof of \ref{newconclemma}]We begin with the case $r=0$. In the fibration sequence
$$
E^{\rel}(P\times I,N\times I)\to \CE^{\rel}(P,N)\to E^{\rel}(P,N).
$$
the middle space is $(n{-}p{-}3)$-connected, therefore $0$-connected. The second map is clearly $0$-connected, and therefore $E^{\rel}(P,N)$ is $0$-connected. We prove that it is $(n{-}p{-}3)$-connected by inductively proving that it is $k$-connected for $0\le k\le n{-}p{-}3$. By inductive hypothesis the left space is $(k{-}1)$-connected. As long as $k\le n{-}p{-}3$ the middle space is $k$-connected. Now $E^{\rel}(P,N)$ is $k$-connected because it is $0$-connected and its loopspace (the fiber of a map from the $(k{-}1)$-connected space $E^{\rel}(P\times I,N\times I)$ to the $k$-connected space $\CE^{\rel}(P,N)$) is $(k{-}1)$-connected.

The proof for $r>0$ is similar. Induct on $r$. Consider the diagram of $r$-cubes
$$
E^{\rel}(P{\times}I,N{\times}I-(Q{\times} I)_\bullet)\to \CE^{\rel}(P,N-Q_\bullet)\to E^{\rel}(P,N-Q_\bullet).
$$
We know that the spaces in the cube $E^{\rel}(P,N-Q_\bullet)$ are $0$-connected. We show by induction on $k$ that this cube is $k$-cartesian for $0\le k\le n-p-2+\Sigma$. To see that it is $0$-cartesian, view it as a map of $(r{-}1)$-cubes which by induction on $r$ are known to be $1$-cartesian. To go from $k{-}1$ to $k$, note that the cube $\Omega E^{\rel}(P,N-Q_\bullet)$ is $(k{-}1)$-cartesian, being the fiber of a map from a $(k{-}1)$-cartesian cube to a $k$-cartesian cube. A $0$-cartesian cube of based spaces must be $k$-cartesian if it becomes $(k{-}1)$-cartesian after looping.

\end{proof}

\section{End of the Main Proof \label{summing-up}}

Here we complete the proof of the main results in the case when all handle codimensions are at least three. 

We know (Lemma \ref{handlesplit}) that Theorem \ref{E} implies Theorem \ref{EF}. We also know, by Remark \ref{leeway}, that when $n{-}p\ge 3$ then 
Theorem \ref{E} follows from a slightly weakened form of Theorem \ref{EF} in which the connectivity is $n{-}2p{-}2{+}\Sigma$ rather 
than $n{-}2p{-}1{+}\Sigma$. By the proof of Lemma \ref{handlesplit}, this weakened Theorem \ref{EF} in turn follows from a similarly weakened Theorem  
\ref{E}, in which the connectivity is  ${-}p{+}\Sigma$ rather than $1{-}p{+}\Sigma$. Thus, in order to prove both of the main results in all cases where the handle codimensions are all at least three, it is enough to prove Theorem \ref{E} with the weakened conclusion: $E(P,N-Q_\bullet)$ is $({-}p{+}\Sigma)$-cartesian. 

Together, the results of the last few sections give us exactly that as long as $n\ge 5$. When $n{=}4$ they give that the cube is \emph{almost} $(-p{+}\Sigma)$-cartesian.

To finish off the low-dimensional cases we will use the result mentioned in Remark \ref{vw}, which gives the number $1{-}rp{+}\Sigma$ rather than $1{-}p{+}\Sigma$ under the hypotheses of Theorem \ref{E}. 

In the rather trivial case when $p{=}0$, these two numbers are equal, so that Theorem \ref{E} holds. Using the equivalence between Theorem \ref{E} and the more symmetrical Theorem \ref{symm}, we see that it also holds if $q_i{=}0$ for some $i$. Thus we may assume that $p>0$ and $q_i>0$.

Because we are assuming $n{-}p\ge 3$, this means that the only remaining case to consider is $n{=}4$, $p{=}1$, $q_i{=}1$. In this case $
-p{+}\Sigma_{i=1}^r(n{-}q_1{-}2)=r{-}1,
$ so the desired statement is that $E(P,N-Q_\bullet)$ is $(r{-}1)$-cartesian.
We know that it is almost $(r{-}1)$-cartesian (Addendum \ref{4}), so we have only to see that it is $0$-cartesian. In fact it is $1$-cartesian, since 
$$
1{-}rp{+}\Sigma{=}1.
$$

\section{Extension to Handle Codimension $\le 2$ \label{cod2}}

In this section we complete the proof of the two main results. That is, we eliminate the hypothesis that all handle codimensions are at least three and obtain the conclusions in all cases except that of classical knot theory.

First consider the case when the handle codimension of $P$ is at least three. Let $N$, $P$, and $Q_1,\dots ,Q_r$ be as in Theorem \ref{E} or Theorem \ref{EF}.

\begin{lem}\label{some3}If $n{-}p\ge 3$, then $E(P,N-Q_\bullet)$ is $(1{-}p{+}\Sigma)$-cartesian and $\EF(P,N-Q_\bullet)$ is $(n{-}2p{-}1{+}\Sigma)$-cartesian.
\end{lem}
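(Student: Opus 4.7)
My plan is to prove the $E$-statement by reducing, via an induction on the handle codimensions of the $Q_i$, to the case $n - q_i \geq 3$ for all $i$ already established in Section \ref{summing-up}. The $\EF$-statement then follows immediately from Lemma \ref{handlesplit}: its proof requires only the $E$-statement at data $(n, p', p', q_1, \ldots, q_r)$ with $p' \leq p$, and under our hypothesis $n - p \geq 3$ we have $n - p' \geq 3$ throughout this range, so the $E$-statement just proved applies.

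For the $E$-statement, after the codimension-zero reductions of Section \ref{ThmA} I would induct on the codimension defect
$$
\delta := \sum_{i=1}^{r}\max(0,\, 3 - (n - q_i)),
$$
with base case $\delta = 0$ handled by Section \ref{summing-up}. For $\delta > 0$, fix some $i$ with $n - q_i \leq 2$, say $i = r$, and assume $Q_r$ is a single $q_r$-handle $D^{q_r} \times D^{n - q_r}$. I would adapt the three-way handle-splitting trick from the proof of Lemma \ref{handlesplit} (applied there to a $P$-handle) by slicing the $D^{q_r}$ factor along two parallel hyperplanes into pieces $Q_r^- \cup Q_r^0 \cup Q_r^+$, so that $Q_r^0$ is a $(q_r{-}1)$-handle with its handle codimension raised by one. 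After a small isotopy making the three pieces disjoint submanifolds of $N$, form the refined $(r{+}2)$-cube $E(P, N - Q^\sharp_\bullet)$ indexed by subsets of $\{1, \ldots, r{-}1, Q_r^-, Q_r^0, Q_r^+\}$. Sub-configurations in which $Q_r$ is effectively replaced by $Q_r^0$ have strictly smaller $\delta$ and are controlled by the inductive hypothesis; a diagram chase on the cubical structure, combined with the unconditional bound of Remark \ref{vw} for any low-dimensional corner faces (analogously to the endgame in Section \ref{summing-up}), should then deliver the desired $(1 - p + \Sigma)$-cartesian-ness of the original $r$-cube $E(P, N - Q_\bullet)$, which is equivalent to the corner $E(P, N - (Q_r^- \cup Q_r^0 \cup Q_r^+ \cup Q_{\underline{r{-}1}}))$ of the refined cube.

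\textbf{Main obstacle.} The delicate step is the diagram chase on the refined $(r{+}2)$-cube. In Lemma \ref{handlesplit}'s $P$-side splitting, three of the four corners of the key square are contractible because $P_0 \cup P_\pm$ is literally a collar on $\partial_0 P$; no analogous trivialization is available when splitting is done on the ``$Q$-side'' (the submanifold being removed from $N$), so the connectivity must instead be propagated through the cube using the inductive hypothesis on $Q_r^0$-subconfigurations together with Remark \ref{vw}. A secondary subtlety is that the $n = 4$ low-dimensional corner cases will presumably require the same case-by-case compensation using the $(1 - rp + \Sigma)$-cartesian bound from Remark \ref{vw} as was employed at the end of Section \ref{summing-up}.
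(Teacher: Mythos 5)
Your deduction of the $\EF$-statement from the $E$-statement is fine (it is exactly Remark \ref{cases}), but the proposed proof of the $E$-statement has a genuine gap at precisely the point you flag as the main obstacle, and the problem is not merely a missing diagram chase: the splitting mechanism itself does not reduce the problem. If $Q_r\cong D^{q_r}\times D^{n-q_r}$ is sliced along the $D^{q_r}$-factor, then relative to $\partial N$ the outer pieces $Q_r^\pm$ are collars (handle dimension $0$), so removing either of them alone changes nothing up to homotopy; the only piece that is an honest $(q_r{-}1)$-handle relative to $\partial N$ is $Q_r^0$. But in the faces of the refined $(r{+}2)$-cube where an outer piece is removed in the presence of the cut along $Q_r^0$ (or along the other outer piece), that outer piece meets the boundary of the cut-open manifold along a full $S^{q_r-1}\times D^{n-q_r}$ and is again an index-$q_r$ handle of codimension $\le 2$: the hard inclusion reappears unchanged, with the same defect $\delta$, so your induction on $\delta$ does not apply to those faces and the scheme is circular. (This is exactly what happens in Lemma \ref{handlesplit}, where $P_-$ becomes a $p$-handle in the complement of $P_0$; there it is harmless because $n-p\ge 3$, whereas here the corresponding codimension is $\le 2$.) The proposed patch, Remark \ref{vw}, cannot fill these faces: it gives $1-rp+\Sigma$, which for $p\ge 1$ and $r\ge 2$ falls short of the target $1-p+\Sigma$ by $(r-1)p$; it only suffices in the special endgame situations of \S\ref{summing-up}. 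Finally, the assertion that the cartesianness of the original $r$-cube "is equivalent to the corner" of the refined cube is not meaningful: being $k$-cartesian is a property of the whole cube, the original cube is a diagonal subcube of the refined one rather than a face, and no valid comparison between the two is supplied.

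The paper's argument is quite different and does not split $Q_r$ at all; the idea you are missing is that the low-codimension direction should simply be sacrificed as a cube direction, with the resulting loss of one absorbed by the codimension-three slack present in the $\EF$-statement, which is why $E$ and $\EF$ must be proved together in the induction rather than $E$ first and $\EF$ afterwards. One inducts on the number $j$ of indices with $n-q_i\le 2$ (base case $j=0$ from \S\ref{summing-up}); with $n-q_r\le 2$, one writes the $(r{+}1)$-cube $\EF(P,N-Q_\bullet)$ as a map of $r$-cubes $\EF(P,N-(Q_\bullet\cup Q_r))\to \EF(P,N-Q_\bullet)$ with $\bullet\subset\underline{r-1}$; by the inductive $\EF$-statement each of these is $(n-2p-1+\Sigma')$-cartesian, where $\Sigma'=\sum_{i<r}(n-q_i-2)$, hence the map is $(n-2p-2+\Sigma')$-cartesian, and $n-2p-2\ge 1-p$ (from $n-p\ge 3$) together with $n-q_r-2\le 0$ gives $n-2p-2+\Sigma'\ge 1-p+\Sigma$. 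Comparing with the $F$-cube via Remark \ref{F} then yields the $E$-statement at level $j$, and the $\EF$-statement at level $j$ follows as in Lemma \ref{handlesplit}. An $E$-only induction of the kind you propose has no access to this slack, which is why it cannot close.
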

\begin{proof}Let $j$ be the number of values of $i$ such that $n{-}q_i\le 2$, and argue by induction on $j$. The $j={0}$ case has been proved. Let $j$ be positive. Without loss of generality $n-q_r\le 2$.

The statement for $\EF(P,N-Q_\bullet)$ follows from the statement for $E(P,N-Q_\bullet)$ as in the proof of Lemma \ref{handlesplit}. (See also Remark \ref{cases}.)

To obtain the statement for $E(P,N-Q_\bullet)$, recall (Remark \ref{F}) that $F(P,N-Q_\bullet)$ is $(1{-}p{+}\Sigma)$-cartesian, so that 
it will suffice to show that
\mbox{$\EF(P,N-Q_\bullet)$} is $(1{-}p{+}\Sigma)$-cartesian. 
To see that it is, write it as a map of cubes
$$
\EF(P,N-(Q_\bullet\cup Q_r))\to \EF(P,N-Q_\bullet).
$$
where $\bullet$ now runs through subsets of $\underline {r-1}$. By induction on $j$ each of the two cubes is $(n{-}2p{-}1+\Sigma_{i=1}^{r-1}(n{-q_i}-2))$-cartesian, and therefore the map is $(n{-}2p{-}2+\Sigma_{i=1}^{r-1}(n{-}q_i{-}2))$-cartesian. This is greater than or equal to $1{-}p+\Sigma_{i=1}^{r}(n{-}q_i{-}2)$, because $n{-}2p{-}2\ge 1{-}p$ and ${n}-q_r-2\le 0$.
\end{proof}

We now prove the remaining cases of Theorem \ref{E}. Use the symmetrical version Theorem \ref{symm}, so that the desired statement is 
that $E(Q_\bullet,N)$ is $(3-n+\Sigma)$-cartesian.
By Lemma \ref{some3}, we have it in all cases in which $n-q_i\ge 3$ for some $i$. Thus we may assume $n-q_i\le 2$ for all $i$.

Cases where some $q_i$ is zero are covered by Remark \ref{vw}. Thus we may also assume $q_i\ge 1$ for all $i$.  Then

\begin{itemize}
\item $n$ cannot be $0$.
\item If $n{=}1$ then $q_i{=}1$ and $3{-}n{+}\Sigma= 2{-}2r<0$. 

\item If $n{=}2$ then $q_i\ge 1$ and $3{-}n{+}\Sigma\le 1{-}r<0$. 

\item If $n\ge 3$ then $3{-}n{+}\Sigma\le 0$, with equality only in the exceptional case when $n{=}3$ and $q_i{=}1$ for all $i$. 
\end{itemize}

Thus outside the exceptional case $n{=}3$ and $q_i{=}1$
there is nothing to prove.
This completes the proof of Theorem \ref{E}. Theorem \ref{EF} follows.

\section{Appendix I: Waldhausen's Theorems A' and B'}\label{WaldAB}

In \cite{Waldhausen_manifold}, Waldhausen gives variants  of Quillen's Theorems A and B in the context of simplicial categories. The purpose of this appendix is to state these results in the case that we need.

Suppose $f\: \cal A \to \cal B$ is a functor of simplicial categories.
We will assume $\text{ob}\,\cal A_k= \text{ob}\,\cal A_0$ and
$\text{ob}\,\cal B_k= \text{ob}\,\cal B_0$ for $k\in \Bbb N$.  For $b\in B_0$ an object, let
$$
f/b
$$
be the simplicial category which in simplicial degree $k$ is the left fiber 
$f_k/b$, where $f_k \: \cal A_k \to \cal B_k$ is the functor given by
restricting $f$ to simplicial degree $k$. (Note that our $f/b$ is the same as Waldhausen's
$f/([0],b)$.)  A morphism $b\to b'$ of $\cal B_0$ induces a simplicial
functor $f/b \to f/b'$ called a transition map.

\begin{thm}[Theorem A'] \label{A'}In addition to the above  assume that 
$f/b$ is contractible. Then the simplicial functor $f\: \cal A \to \cal B$
is a weak equivalence.
\end{thm}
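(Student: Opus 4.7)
The plan is to follow the classical pattern and deduce Theorem A' from a companion Theorem B' for simplicial categories, in the same way Quillen deduces his Theorem A from his Theorem B. First I would view $\cal A$ and $\cal B$ as bisimplicial sets by taking the categorical nerve in each simplicial degree, so that $f\colon \cal A\to \cal B$ denotes equally the simplicial functor and the induced map on realizations. Since $\cal B$ is a CW complex in which every path component contains a $0$-vertex, it suffices to show that the homotopy fiber of $f$ over each object $b\in \cal B_0$ is contractible.

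The heart of the argument is to prove the following companion statement (the simplicial-category version of Theorem B'): for every $b\in \cal B_0$ such that the transition maps $f/b\to f/b'$ induced by morphisms $b\to b'$ of $\cal B_0$ are weak equivalences, the space $f/b$ is naturally weakly equivalent to the homotopy fiber of $f$ at $b$. Given this companion statement, Theorem A' is immediate: the hypothesis that every $f/b$ is contractible makes every transition map a weak equivalence automatically (any map between contractible spaces is one), so every homotopy fiber of $f$ over a $0$-vertex of $\cal B$ is identified with the contractible space $f/b$, and hence $f$ is a weak equivalence.

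The main obstacle is the proof of the B' statement itself. I would follow Quillen's original strategy using the two-sided bar (or comma) construction: build an auxiliary bisimplicial object $E$ together with natural maps $\cal A\xleftarrow{\sim} E\to \cal B$ in which the first arrow is a weak equivalence (via a simplicial contraction of the ``source'' fibers) and the fiber of the second over $b$ is $f/b$. The object-constancy hypotheses $\text{ob}\,\cal A_k=\text{ob}\,\cal A_0$ and $\text{ob}\,\cal B_k=\text{ob}\,\cal B_0$ are what permit this construction to be carried out levelwise in a uniform way, and the transition-map condition is what makes the projection to $\cal B$ behave like a quasi-fibration with fiber $f/b$ over $b$. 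The rest is standard bisimplicial realization-lemma bookkeeping, together with the elementary observation that under the hypothesis of A' every $f/b$ is contractible, so the transition condition in B' is satisfied trivially.
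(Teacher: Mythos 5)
Your reduction of Theorem \ref{A'} to Theorem \ref{B'} is correct, but note that it is a genuinely different route from the paper's: the paper proves neither statement, justifying both by a single citation to the addendum on p.~166 of \cite{Waldhausen_manifold}, of which A$'$ and B$'$ are special cases. You instead run the classical Quillen-style deduction: every path component of the realization of $\cal B$ contains a vertex coming from an object $b\in\cal B_0$, so it suffices to identify the homotopy fiber of $f$ over each such $b$; under the hypothesis of A$'$ all left fibers $f/b$ are (nonempty and) contractible, so every transition map is automatically a weak equivalence and Theorem \ref{B'} applies; and since $\text{id}_{\cal B}/b$ has a terminal object in each simplicial degree, compatibly with the simplicial operators (this small point is worth saying explicitly), it is contractible, so the homotopy cartesian square of B$'$ exhibits $f/b$ as the homotopy fiber of $f$ at $b$. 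Since B$'$ is stated in the same appendix, this deduction by itself is already a complete proof of A$'$ relative to the paper's standards, and what it buys over the paper's treatment is an explicit logical dependence of A$'$ on B$'$ rather than two parallel appeals to Waldhausen.

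The caveat concerns your proposed independent proof of B$'$. The two-sided bar construction $\cal A \leftarrow E \to \cal B$ is the right skeleton, but the step you compress into ``behaves like a quasi-fibration'' plus ``standard bisimplicial realization-lemma bookkeeping'' is precisely the nontrivial content of the cited addendum: one needs Quillen's lemma that over a diagram in which all transition maps are weak equivalences the actual fibers compute homotopy fibers, redone with the extra simplicial direction, and with the added wrinkle that the hypothesis of B$'$ only supplies equivalences for transition maps induced by morphisms of $\cal B_0$, so the object-constancy assumptions must be used to control the comma constructions and transition maps arising from morphisms in $\cal B_k$ for $k>0$. If you intend A$'$ to rest on B$'$ as stated (as the paper implicitly allows), your argument is fine; if you intend to reprove B$'$, that part is a real gap and not bookkeeping.
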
 

\begin{thm}[Theorem B']\label{B'} In addition to the above assume that each
transition map $b\to b'$ of $\cal B_0$ induces a weak equivalence
\mbox{$f/b \to f/b'$.} Then for every object $b\in \cal B_0$ the square
$$
\xymatrix{
f/b \ar[r] \ar[d] & \cal A \ar[d]^{f} \\
\text{\rm id}_{\cal B}/b \ar[r] & \cal B 
}
$$is homotopy cartesian.
\end{thm}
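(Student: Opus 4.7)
The proof plan follows Quillen's original argument for Theorem B, transplanted to the bisimplicial setting of simplicial categories, using Theorem A' (already stated) as the key input.

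First, I would reduce the statement to a homotopy-fiber identification. Observe that $|\text{id}_{\cal B}/b|$ is contractible: in every simplicial degree $k$, the category $\text{id}_{\cal B_k}/b$ has $(b,\text{id}_b)$ as a terminal object, and every face and degeneracy operator preserves this choice, so the bisimplicial nerve is level-wise contractible in a compatible way and hence has contractible realization. Consequently, the square in question is homotopy cartesian if and only if the natural map from $|f/b|$ to the homotopy fiber of $|f|\colon |\cal A|\to|\cal B|$ over the vertex $b$ is a weak equivalence. This reformulation becomes the main goal.

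Second, I would replace $f$ by a quasi-fibration model via a simplicial Grothendieck construction. The hypothesis says that the assignment $b\mapsto f/b$ carries every morphism of $\cal B_0$ to a weak equivalence of simplicial categories. Form the simplicial category $\cal B\wr(f/-)$ whose objects are pairs $(b,x)$ with $b\in\cal B_0$ and $x$ an object of $f/b$, with morphisms that mix base and fiber directions glued by the transition functors. There is a projection $\cal B\wr(f/-)\to\cal B$ whose strict fiber over $b$ is exactly $f/b$, together with a natural map $\cal A\to\cal B\wr(f/-)$ sending $a\mapsto(f(a),(a,\text{id}_{f(a)}))$ that commutes with the projection to $\cal B$.

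Third, I would check two assertions. (i) The map $\cal A\to\cal B\wr(f/-)$ is a weak equivalence by Theorem A': the left fiber over any object $(b,(a,\gamma))$ is an under-category with an evident initial object, hence contractible in every simplicial degree. (ii) The projection $\cal B\wr(f/-)\to\cal B$ is a quasi-fibration, meaning its strict fiber $f/b$ models the homotopy fiber over $b$. Combining (i) and (ii) identifies $|f/b|$ with the homotopy fiber of $|f|$ at $b$, finishing the proof.

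The principal obstacle is assertion (ii), the quasi-fibration verification in the bisimplicial context. The hypothesis supplies only weak equivalences $f/b\to f/b'$, not strict isomorphisms, so one must show that these coherent equivalences suffice to force $\cal B\wr(f/-)\to\cal B$ to be a homotopy quasi-fibration. I expect this to follow from a realization lemma (of Bousfield--Friedlander type) applied to the simplicial space of fibers, the hypothesis providing exactly the $\pi_*$-Kan condition needed; verifying this cleanly, and reconciling it with the slightly ambiguous nature of the Grothendieck construction when transition data is only an equivalence, is the core technical content.
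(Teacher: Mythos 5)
You should know at the outset that the paper contains no proof of Theorem~B$'$ to compare with: it is simply quoted from Waldhausen's paper, as a special case of the addendum on p.~166 of that reference. Your outline is the standard Quillen-style skeleton (comma/Grothendieck construction, Theorem~A$'$ to identify $|\mathcal{A}|$ with the total object, then a fiberwise criterion identifying the strict fiber $f/b$ with the homotopy fiber), and that skeleton is indeed how results of this type are proved. Your Step 1 and assertion (i) are fine: $\mathrm{id}_{\mathcal{B}}/b$ has the terminal object $(b,\mathrm{id}_b)$ in every simplicial degree, compatibly with the simplicial operators, and the left fibers of $\mathcal{A}\to\mathcal{B}\wr(f/-)$ are in each degree isomorphic to $\mathrm{id}_{\mathcal{A}_k}/a$ (over-categories with terminal objects rather than under-categories with initial ones, but that is only a matter of orientation conventions), so Theorem~A$'$ applies.

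The genuine gap is assertion (ii), which is not a deferred technicality but the entire content of the theorem, and the tool you name for it is not the right one. The $\pi_*$-Kan condition of Bousfield--Friedlander is a condition on the simplicial space of (homotopy) fibers --- roughly, Kan-type conditions on the simplicial objects of homotopy groups --- and it is in no way ``provided by'' the hypothesis that the transition maps $f/b\to f/b'$ are weak equivalences; it would have to be checked separately, it is not automatic here since neither $N\mathcal{B}_k$ nor the fibers are connected in general, and even granting it, Bousfield--Friedlander only compares the realization of the \emph{levelwise homotopy} fibers with the homotopy fiber of the realizations, so you would still need to identify the strict fiber $f_k/b$ with the levelwise homotopy fiber in each degree $k$ --- which is Quillen's Theorem~B applied to $f_k:\mathcal{A}_k\to\mathcal{B}_k$ and requires the transition maps along \emph{all} morphisms of $\mathcal{B}_k$ to be equivalences, degreewise. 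The stated hypothesis gives much less: equivalences $f/b\to f/b'$ only for morphisms of $\mathcal{B}_0$, and only after realizing the whole simplicial category $f/b$, not degree by degree. So one cannot apply Quillen's Theorem~B in each simplicial degree and then realize, and a fiberwise lemma in the external direction is not directly available either. Explaining why the degree-zero hypothesis nevertheless suffices is exactly the content of Waldhausen's addendum, and it is the piece missing from your argument.
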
 

Each of these statements is a special case of
the result of \cite[p.166]{Waldhausen_manifold} that appears in the addendum
on that page.

\section{Appendix II: Simple Poincar\'e Spaces \label{simple}}

Let $X$ be a connected space. A {\it finiteness structure} on $X$ consists of a finite complex $K$ together with
a choice of homotopy equivalence $(K,h\: K\to X)$. Say that two such structures $(K,h)$ and $K',h')$ are equivalent if the resulting homotopy equivalence $K\to K'$ is simple.
 
When a finiteness structure exists then the Whitehead group 
$\Wh(X)$ ($=\Wh(\pi)$, where $\pi = \pi_1(X)$) 
acts freely and transitively on the set of equivalence classes of finiteness structures. The rule is that an element $x$ takes $(K,h)$ to $(K',h\circ f)$, where the finite complex $K'$ and homotopy equivalence $f:K'\to K$ are chosen such that the torsion of $f$ corresponds to $\tau$ by $h$. 

There is a straightforward generalization to the following relative case: Fix a finite complex $D$, and say that a finiteness structure on a pair $(X,D)$ is a homotopy equivalence $h:(K,D)\to (X,D)$ where $(K,D)$ is a finite CW pair and $D\to D$ is the identity. Equivalence classes are defined using homotopy equivalences fixed on $D$, and again if the set of classes is nonempty then it has a canonical free and transitive action of $\Wh(X)$.

Now let $K$ be a finite complex satisfying $n$-dimensional Poincar\'e duality, with orientation bundle $\cal L$ and fundamental class $[K]\in H_n(K;\cal L)$. Let $\pi$ be the fundamental group and write $\Lambda = \Bbb Z[\pi]$. The duality map, cap product with $[K]$, leads to a chain map $C^*(K;\Lambda) \to C_{d-*}(K; \Lambda \otimes \cal L)$ from cellular cochains to cellular chains. This is a chain homotopy equivalence between free finite complexes of $\Bbb Z[\pi]$-modules, well defined up to chain homotopy. Denote its Whitehead torsion by $\tau_K$, and call $K$ simple if $\tau_K=0$. 

The torsion of a finiteness structure $(K,h)$ on a Poincar\'e complex $X$ is defined by $\tau_{K,h}=h_\ast\tau_K$. A simple structure on $X$ is a finiteness structure with zero torsion.

Let $\tau\mapsto \tau^*$ be the involution of the Whitehead group determined by the anti-involution $g\mapsto \epsilon(g)g^{-1}$ of the group ring $\Bbb Z[\pi]$, where $\epsilon:\pi\to \lbrace +1,-1\rbrace$ is the orientation character. The torsion of a finite Poincar\'e complex satisfies $\tau_K  = (-1)^n \tau^*_{K}$. Let $\frak N\:\Wh(X)\to \Wh(X)$ be the norm map $x\mapsto x{+}(-1)^nx^\ast$.
  
When two finiteness structures $(K,h)$ and $(K',h')$ on the Poincar\'e complex $X$ are related by a homotopy equivalence $f:K'\to K$ (which may be taken to be cellular), then there is a homotopy commutative diagram of chain equivalences 
$$
\xymatrix{
C^*(K';\Lambda) \ar[r]^{f^*} \ar[d]_{\cap [K']} & C^*(K;\Lambda) \ar[d]^{\cap [K]}\\
C_{d-*}(K';\Lambda^t) & C_{d-*}(K;\Lambda^t)\ar[l]^{f_*}
}
$$
The composition formula for Whitehead torsion 
\cite[22.4]{Cohen} gives that
 $\tau_{K',h'} {-} \tau_{K,h} = x {+} (-1)^n x^*$, where $x \in \Wh(X)$
is the torsion of $f$. We conclude the following: First, if $X$ has a finiteness structure then the torsion classes of all such structures belong to one element of the cokernel of $\frak N$. Second, a simple structure exists if and only if this element of the cokernel is zero. Third, in this case the free transitive action of $\Wh(X)$ on the set of finiteness structures restricts to give a free transitive action of the kernel of $\frak N$ on the set of simple structures. 

Again there is a straightforward relative version. Suppose that the pair $(X,D)$ satisfies $n$-dimensional Poincar\'e duality and that the $(n{-}1)$-dimensional Poincar\'e complex $D$ is simple. Then for an equivalent finite $(K,D)$ the torsion may be defined by the cap product \mbox{$C^*(K;\Lambda) \to C_{d-*}(K,D; \Lambda \otimes \cal L)$} or by the cap product $C^*(K,D;\Lambda) \to C_{d-*}(K; \Lambda \otimes \cal L)$ (it is the same because $D$ is simple), and the rest of the story is as in the absolute case.

\end{document}